\documentclass[
            a4paper,
            11pt,
            reqno,
            centertags
                ]{amsart}

\usepackage[utf8]{inputenc}
\usepackage[ngerman,english]{babel}
\usepackage[T1]{fontenc}
\usepackage{lmodern,stackengine}

\usepackage[
        spacing    = true, 
        kerning    = true, 
        tracking   = true, 
        expansion  = true, 
        protrusion = true  
           ]{microtype}

\usepackage{amssymb,amsthm,bbm,bm}
\usepackage[fixamsmath,disallowspaces]{mathtools}

\newtheoremstyle{ToBeProved}
  {.5\baselineskip}
  {.5\baselineskip}
  {\itshape}
  {}
  {\bfseries}
  {.}
  {0.5em}
  {}

\newtheoremstyle{BelieveIt}
  {.5\baselineskip}
  {.5\baselineskip}
  {}
  {}
  {\bfseries}
  {.}
  {0.5em}
  {}

\theoremstyle{ToBeProved}
\newtheorem*{thm*}{Theorem}
\newtheorem{thm}{Theorem}[section]
\newtheorem{lem}[thm]{Lemma}
\newtheorem{cor}[thm]{Corollary}
\newtheorem{pro}[thm]{Proposition}

\theoremstyle{BelieveIt}
\newtheorem{defi}[thm]{Definition}

\newtheorem{rem}[thm]{Remark}

\newcommand{\defgl}{\mathrel{\mathop:\!\!=}} 
\newcommand{\gldef}{\mathrel{=\!\!\mathop:}} 
\newcommand{\longto}{\longrightarrow}
\newcommand{\refpnt}{\,|\,}
\newcommand{\letter}[1]{\mathtt{#1}}
\newcommand{\mathbold}[1]{\bm{#1}}
\newcommand{\ts}{\hspace{0.5pt}}
\newcommand{\II}{\ts\mathrm{i}\ts}
\newcommand{\remend}{\hfill\small{$\blacklozenge$}}
\newcommand{\dif}{\mathop{}\!\mathrm{d}}
\newcommand{\IC}{\mathbb{C}}
\newcommand{\IE}{\mathbb{E}}
\newcommand{\IN}{\mathbb{N}}
\newcommand{\IR}{\mathbb{R}}
\newcommand{\IV}{\mathbb{V}}
\newcommand{\IX}{\mathbb{X}}
\newcommand{\IY}{\mathbb{Y}}
\newcommand{\IZ}{\mathbb{Z}}
\newcommand{\CA}{\mathcal{A}}
\newcommand{\CC}{\mathcal{C}}
\newcommand{\CD}{\mathcal{D}}
\newcommand{\CG}{\mathcal{G}}
\newcommand{\CL}{\mathcal{L}}
\newcommand{\CN}{\mathcal{N}}
\newcommand{\CP}{\mathcal{P}}
\newcommand{\CS}{\mathcal{S}}
\newcommand{\CZ}{\mathcal{Z}}
\newcommand{\FB}{\mathfrak{B}}
\newcommand{\FZ}{\mathfrak{Z}}

\newcommand{\subw}{\vartriangleleft}
\DeclareMathOperator{\steq}{\,\topinset{{\tiny 
\scalebox{0.75}{$\bullet$}}}{=}{-3.5pt}{2.5pt}\,}

\DeclareMathOperator{\stsub}{\hspace{1pt}\topinset{{\tiny 
\scalebox{0.75}{$\bullet$}}}{$\vartriangleleft$}{2.15pt}{1.55pt}\,}

\renewcommand{\le}{\leqslant}
\renewcommand{\ge}{\geqslant}

\DeclareMathOperator{\mat}{Mat}
\DeclareMathOperator{\Rea}{Re}
\DeclareMathOperator{\sinc}{sinc}
\DeclareMathOperator{\dens}{dens}
\DeclareMathOperator{\vol}{vol}
\DeclareMathOperator{\e}{e}

\usepackage{tikz}
    \usetikzlibrary{matrix,arrows,decorations.markings,decorations.pathreplacing}

\usepackage[labelfont=bf]{caption}
\usepackage[shortlabels]{enumitem}
\usepackage[
            linktocpage = true,
            pdfauthor   = {Markus Moll},
            pdftitle    = {Diffraction Of Random Noble Means Words},
            colorlinks  = true,
            linkcolor   = black,
            citecolor   = black,
            urlcolor    = black,
            breaklinks  = true
                ]{hyperref}

\begin{document}
    \linespread{1.08}
	\title{Diffraction of Random Noble Means Words}
	\author{Markus Moll}
    \email{mmoll@math.uni-bielefeld.de}
    \address{University of Bielefeld, Faculty of Mathematics, Universitätsstr.
    25, D--33615 Bielefeld, Germany}
	\begin{abstract}
        In this paper, several aspects of the random noble means substitution
        are studied. Beyond important dynamical facets as the frequency of
        subwords and the computation of the topological entropy, the important
        issue of ergodicity is addressed. From the geometrical point of view,
        we outline a suitable cut and project setting for associated point sets
        and present results for the spectral analysis of the diffraction
        measure.
	\end{abstract}
	\maketitle
    
    \section{Introduction}
    
    In 1989, Godr{\`e}che and Luck \cite{luck} introduced a (locally)
    randomised extension of the well-studied Fibonacci substitution. They
    presented first results concerning the topological entropy and the spectral
    type of the diffraction measure. In this context, it is most remarkable
    that the dynamical hull features positive entropy but at the same time is
    regular enough to contain only Meyer sets. The arguments applied in
    \cite[Sec.~5.1]{luck} for the computation of the topological entropy rely
    on the fact that it is sufficient to merely control the growth behaviour of
    exact random Fibonacci words. This is a non-trivial assertion and has only
    recently been proved by Nilsson \cite{nilsson} via intricate combinatorial
    arguments. Furthermore, Godr{\`e}che and Luck argued via a concrete
    calculation that the diffraction measure comprises a continuous part.
    There, they implicitly assumed the existence of an ergodic measure on the
    randomised hull without proof or other evidence.
    
    In this paper, we will generalise the random Fibonacci substitution to the
    one-parameter family of random noble means substitutions and substantiate
    the results of Godr{\`e}che and Luck with mathematical rigour.
	
	\section{Notation}
	
    Let us start with a brief summary of the essential notation that will be
    used throughout the text. We will loosely complement this list as we
    continue. A more detailed introduction can be found in standard textbooks;
    see \cite{bagrimm,fogg,kitchens,lothaire}.
    
    The finite \emph{alphabet} on $n$ letters is denoted by $\CA_{n}^{} \defgl
    \{ \letter{a}_{i}^{} \mid 1 \le i \le n \}$ and we refer to
    $\CA_{n}^{\ast}$ as the free monoid over $\CA_{n}^{}$. The latter is the
    set of finite words over $\CA_{n}^{}$ together with the empty word
    $\varepsilon$ and endowed with the concatenation of words as
    multiplication. Let $v$, $w \in \CA_{n}^{\ast}$ and $v$ be a connected
    substring of $w$. Then, we call $v$ a \emph{subword} of $w$ and write $v
    \subw w$ in this case. If a more precise emphasis on the location of a
    subword is needed, we will write $w_{[j,k]} \defgl w_{j}^{} \cdots w_{k}^{}
    \subw w$ where $w_{[j,k]} \defgl \varepsilon$ if $j > k$. The \emph{length}
    of some word $w \in \CA_{n}^{\ast}$ will be written as $\lvert w \rvert$
    and $\lvert w \rvert_{v}^{} = \lvert \{ k \mid v = w_{[k,k+\lvert v \rvert
    - 1]} \} \rvert$ is the \emph{occurrence number} of the word $v \in
    \CA_{n}^{\ast}$ in $w$ as a subword. The set $\CA_{n}^{\IZ}$ of bi-infinite
    sequences over $\CA_{n}^{}$ is equipped with the product topology that is
    assumed to be generated by the class $\mathfrak{Z}(\CA_{n}^{\IZ})$ of
    cylinder sets
    \begin{equation*}
        \CZ_{k}^{}(v) \defgl
        \bigl\{
            w \in \CA_{n}^{\IZ} \mid w_{[k,k+ \lvert v \rvert - 1]}^{} = v
        \bigr\},
    \end{equation*}
    for any $k \in \IZ$ and $v \in \CA_{n}^{\ast}$, and for the purpose of our
    considerations it will be convenient to regard $\CA_{n}^{\ast}$ as being
    embedded into $\CA_{n}^{\IZ}$.

    A \emph{substitution rule} $\vartheta$ is any non-erasing endomorphism on
    $\CA_{n}^{\ast}$ that can and will be extended to $\CA_{n}^{\IZ}$ via
    concatenation.

    \section{The random noble means substitution}

    For the rest of the treatment, we fix the binary alphabet $\CA_{2}^{} = \{
    \letter{a},\letter{b} \}$, an arbitrary $m \in \IN$ and define for each
    $0 \le i \le m$ a \emph{noble means substitution (NMS)}
    $\zeta_{m,i}^{}$ on $\CA_{2}^{\IZ}$ via
	\begin{equation*}
	    \zeta_{m,i}^{} \colon
	    \biggl\{
	        \begin{array}{lll}
                \letter{a}  & \longmapsto & \letter{a}^{i} \letter{b}
                \letter{a}^{m - i}, \\
	            \letter{b}  & \longmapsto & \letter{a},
	        \end{array}
	    \biggr.
	    \quad \text{where} \quad M_{m}^{} \defgl M_{\zeta_{m,i}}^{} \defgl
	    \begin{pmatrix*}
	        m & 1 \\
	        1 & 0
	    \end{pmatrix*}
	\end{equation*}
    is its primitive and unimodular substitution matrix that is independent of
    $i$. Its Perron--Frobenius (PF) eigenvalue \cite{seneta} is the
    Pisot--Vijayaraghavan (PV) number $\lambda_{m}^{} \defgl (m + \sqrt{m^{2} +
    4})/2$ which has algebraic conjugate $\lambda_{m}^{\prime} = m
    -\lambda_{m}^{}$. The discrete hull $\IX_{m,i}^{}$ of each $\zeta_{m,i}^{}$ is
    defined as the orbit closure of some fixed point of a suitable power of
    $\zeta_{m,i}^{}$, with respect to the shift $S$, in the product topology.  Now,
    one convenient property of the noble means family $\CN_{m}^{} \defgl \{
    \zeta_{m,i}^{} \mid 0 \le i \le m \}$ is that all these hulls coincide
    individually which is a direct consequence of the primitivity of each
    $\zeta_{m,i}^{}$ and the fact that all $\zeta_{m,i}^{}$ are pairwise
    conjugate \cite[Prop.~4.6]{bagrimm}. As our final goal is the local mixture
    of all members of $\CN_{m}^{}$, this constitutes a substantial technical
    simplification over the more general situation. Several important
    properties of the NMS family can be summarised as follows; compare
    \cite[Lem.~2.9]{moll}.
	
    \begin{lem} \label{lem:det_noble_means}
        For an arbitrary but fixed $m \in \IN$, each member of $\CN_{m}^{}$ is
        a primitive and aperiodic Pisot substitution with unimodular
        substitution matrix. Its two-sided discrete hulls $\IX_{m,i}^{}$ are
        uncountable and reflection symmetric, and the $\IX_{m,i}^{}$ coincide
        for $0 \le i \le m$.  \qed
	\end{lem}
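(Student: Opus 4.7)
The plan is to treat the seven assertions of the lemma in turn, since all but one reduce to short matrix computations or to standard results about primitive substitutions. A direct calculation gives $\det M_m = -1$, which establishes unimodularity, while
\begin{equation*}
    M_m^2 \,=\, \begin{pmatrix*} m^2 + 1 & m \\ m & 1 \end{pmatrix*}
\end{equation*}
has strictly positive entries, so $M_m$ is primitive and consequently so is each $\zeta_{m,i}^{}$. The characteristic polynomial $X^2 - mX - 1$ has roots $\lambda_m^{}$ and $\lambda_m^{\prime} = -1/\lambda_m^{}$; since $\lambda_m^{} > 1$ for all $m \in \IN$, the algebraic conjugate satisfies $\lvert \lambda_m^{\prime} \rvert < 1$, which identifies $\lambda_m^{}$ as a PV number and each $\zeta_{m,i}^{}$ as a Pisot substitution.

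For aperiodicity I would argue via letter frequencies: by Perron--Frobenius, the asymptotic frequencies of $\letter{a}$ and $\letter{b}$ in any element of $\IX_{m,i}^{}$ are the normalised components of the right PF eigenvector of $M_m$, hence irrational rational functions of $\lambda_m^{}$. Any shift-periodic element of $\CA_2^{\IZ}$ has rational letter frequencies, so no such element can belong to $\IX_{m,i}^{}$. Uncountability of $\IX_{m,i}^{}$ is then the standard consequence of primitivity together with aperiodicity; compare, for instance, \cite{bagrimm}. The coincidence $\IX_{m,0}^{} = \cdots = \IX_{m,m}^{}$ has already been justified in the paragraph preceding the lemma: the substitutions $\zeta_{m,i}^{}$ are pairwise conjugate via connecting words of the form $\letter{a}^{k}$, so \cite[Prop.~4.6]{bagrimm} applies directly.

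The only remaining item is reflection symmetry. Letting $\rho$ denote the reversal involution on $\CA_2^{\IZ}$, one checks on the generators that $\rho(\zeta_{m,i}^{}(\letter{a})) = \letter{a}^{m-i} \letter{b} \letter{a}^{i} = \zeta_{m,m-i}^{}(\letter{a})$ and $\rho(\zeta_{m,i}^{}(\letter{b})) = \letter{a} = \zeta_{m,m-i}^{}(\letter{b})$; since $\rho$ reverses the order of concatenation, this extends to the intertwining relation $\rho \circ \zeta_{m,i}^{} = \zeta_{m,m-i}^{} \circ \rho$ on all of $\CA_2^{\IZ}$. Combined with $\rho \circ S = S^{-1} \circ \rho$, this forces $\rho(\IX_{m,i}^{}) = \IX_{m,m-i}^{}$, and the hull coincidence then gives $\rho(\IX_{m,i}^{}) = \IX_{m,i}^{}$, which is the desired reflection symmetry. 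The only technical point requiring care is to verify that the intertwining is compatible with the choice of bi-infinite fixed-point seed used to generate $\IX_{m,i}^{}$, but this is purely bookkeeping once the single-letter identity is in hand, and I do not anticipate any genuine obstacle; the lemma then follows by assembling the pieces above.
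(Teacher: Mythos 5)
Your proposal is correct. The paper itself offers no argument for this lemma --- it is stated with a closing box and a pointer to \cite[Lem.~2.9]{moll} --- so there is no in-text proof to compare against; your version supplies exactly the standard chain one would expect: $\det M_m^{}=-1$ and positivity of $M_m^{2}$ for unimodularity and primitivity, irrationality of $\lambda_m^{}$ (note $m^{2}+4$ is never a perfect square for $m\ge 1$) combined with the PF letter frequencies for aperiodicity, minimality plus infiniteness for uncountability, the conjugating words $\letter{a}^{k}$ for the coincidence of the hulls, and the intertwining $\rho\circ\zeta_{m,i}^{}=\zeta_{m,m-i}^{}\circ\rho$ together with that coincidence for reflection symmetry. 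All of these steps check out, and the one technical point you flag (independence of the hull from the chosen seed) is indeed harmless for primitive substitutions, since the hull can equivalently be described as the set of all bi-infinite sequences whose finite subwords are legal.
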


    We proceed with the general notion of a random substitution rule. Note that
    the mixture is performed on a local level i.e. the image of each letter of
    some word under the substitution rule is chosen seperately and
    independently. In the noble means case the locality leads to a significant
    enlargement of the according discrete hull whereas the hull would stay the
    same when studying global mixtures of the substitutions in $\CN_{m}^{}$.
    This is an immediate consequence of Lemma~\ref{lem:det_noble_means}.
	
	\begin{defi} \label{def:random_subst}
        A substitution $\vartheta \colon \CA_{n}^{\ast} \longto \CA_{n}^{\ast}$
        is called \emph{stochastic} or a \emph{random substitution} if there
        are $k_{1}^{},\ldots, k_{n}^{} \in \IN$ and probability vectors
	    \begin{equation*}
	        \Bigl\{
	            \mathbold{p}_{i}^{} = (p_{i1}^{},\ldots, p_{ik_{i}}^{}) \mid
	            \mathbold{p}_{i}^{} \in [0,1]^{k_{i}}
	            \text{ and }
	            \sum_{j=1}^{k_{i}} p_{ij}^{} = 1, \, 1 \le i \le n
	        \Bigr\},
	    \end{equation*}
	    such that
	    \begin{equation*}
	        \vartheta \colon \letter{a}_{i}^{} \longmapsto
	        \left\{
	            \begin{array}{cc}
	                w^{(i,1)},     & \text{with probability } p_{i1}^{},     \\
	                \vdots         & \vdots                                  \\
	                w^{(i,k_{i})}, & \text{with probability } p_{ik_{i}}^{}, \\
	            \end{array}
	        \right.
	    \end{equation*}
        for $1 \le i \le n$ where each $w^{(i,j)} \in \CA_{n}^{\ast}$. The
        \emph{substitution matrix} is defined by 
	    \begin{equation*}
	        M_{\vartheta}^{} \defgl
	        \Bigl(
	            \sum_{q=1}^{k_{j}} p_{jq}^{}\lvert w^{(j,q)} 
	            \rvert_{\letter{a}_{i}^{}}^{} 
	        \Bigr)_{ij}^{} \in \mat(n,\IZ).
	    \end{equation*}
	\end{defi}
	
	\begin{rem} 
        In the stochastic situation we agree on a slightly modified notion of
        the subword relation.  For any $v$, $w \in \CA_{n}^{\ast}$, by $v
        \stsub \vartheta^{k}(w)$  we mean that $v$ is a subword of \emph{at
        least} one image of $w$ under $\vartheta^{k}$ for any $k \in \IN$.
        Similarly, by $v \steq \vartheta^{k}(w)$ we mean that there is \emph{at
        least} one image of $w$ under $\vartheta^{k}$ that coincides with $v$.
        \remend
	\end{rem}
	
	\begin{defi}
        A random substitution $\vartheta \colon \CA_{n}^{\ast} \longto
        \CA_{n}^{\ast}$ is \emph{irreducible} if for each pair $(i,j)$ with $1
        \le i,j \le n$, there is a power $k \in \IN$ such that
        $\letter{a}_{i}^{} \stsub \vartheta^{k}(\letter{a}_{j}^{})$. The
        substitution $\vartheta$ is \emph{primitive} if there is a $k \in \IN$
        such that $\letter{a}_{i}^{} \stsub \vartheta^{k}(\letter{a}_{j}^{})$
        for all $1 \le i,j \le n$.
	\end{defi}
	
    Now, let $m \in \IN$ and $\mathbold{p}_{m}^{} = (p_{0}^{},\ldots,
    p_{m}^{})$ be a probability vector that are both assumed to be fixed.  That
    means $\mathbold{p}_{m}^{} \in [0,1]^{m+1}$  and $\sum_{j = 0}^{m} p_{j}^{}
    = 1$. The random substitution $\zeta_{m}^{} \colon \CA_{2}^{\ast} \longto
    \CA_{2}^{\ast}$ is defined by 
	\begin{equation}
	    \zeta_{m}^{} \colon 
	    \left\{
            \begin{array}{lll}
                \letter{a} & \longmapsto & \left\{
                \begin{array}{cc}
                    \zeta_{m,0}^{}(\letter{a}), & \text{with probability } p_{0}^{}, \\
                    \vdots                      & \vdots                             \\
                    \zeta_{m,m}^{}(\letter{a}), & \text{with probability } p_{m}^{},
                \end{array}\right. \\
                \letter{b} & \longmapsto & \letter{a},
            \end{array}
	    \right.
	\end{equation}
    and the one-parameter family $\mathcal{R} = \{ \zeta_{m}^{} \}_{m \in
    \IN}^{}$ is called the family of \emph{random noble means substitutions
    (RNMS)}.  We refer to the $p_{j}^{}$ as the \emph{choosing probabilities}
    and call $\zeta_{m}^{}(w)$ for any $w \in \CA_{2}^{\ast}$ an \emph{image}
    of $w$ under $\zeta_{m}^{}$. Of course, the deterministic cases of the
    family $\CN_{m}^{}$ (choose the corresponding $p_{j}^{} = 1$) and
    \emph{incomplete} mixtures, with several $p_{j}^{} = 0$, are included here
    but we are mainly interested in the generic cases where
    $\mathbold{p}_{m}^{} \gg 0$. This is a standing assumption for the rest of
    the treatment, where we occasionally comment on the disregarded cases if
    this seems appropriate. The substitution matrix of $\zeta_{m}^{}$ in the
    sense of Definition~\ref{def:random_subst} is given by
	\begin{equation*}
	    M_{m}^{} \defgl 
	    \begin{pmatrix*}
            \sum_{j=0}^{m} p_{j}^{} \lvert \zeta_{m,j}^{}(\letter{a})
            \rvert_{\letter{a}}^{} & 1 \\
            \sum_{j=0}^{m} p_{j}^{} \lvert \zeta_{m,j}^{}(\letter{a})
            \rvert_{\letter{b}}^{} & 0
	    \end{pmatrix*}
	    =
	    \begin{pmatrix*}
	        m & 1 \\
	        1 & 0
	    \end{pmatrix*}.
	\end{equation*}
    Due to the fact that there is no direct analogue to a bi-infinite fixed
    point in the randomised case, we have to slightly modify the notion of the
    discrete hull here.
    \begin{defi}
        For an arbitrary but fixed $m \in \IN$, define
        \begin{equation*}
            X_{m}^{} \defgl 
            \Bigl\{ 
                w \in \mathcal{A}_{2}^{\mathbb{Z}} \mid w
                \text{ is an accumulation point of } \bigl( \zeta_{m}^{k} 
                (\letter{a} \refpnt \letter{a}) \bigr)_{k \in \mathbb{N}_{0}}
            \Bigr\}.
        \end{equation*}
        The \emph{two-sided discrete stochastic hull} $\mathbb{X}_{m}^{}$ is
        defined as the smallest closed and shift-invariant subset of
        $\mathcal{A}_{2}^{\mathbb{Z}}$ with $X_{m}^{} \subset
        \mathbb{X}_{m}^{}$. Elements of $X_{m}^{}$ are called \emph{generating
        random noble means words.}
    \end{defi}
    A word $w \in \CA_{2}^{\ast}$ is called \emph{legal} (or
    $\zeta_{m}^{}$-\emph{legal}) if there is a $k \in \IN$ such that $w \stsub
    \zeta_{m}^{k}(\letter{b})$. For $\ell \ge 0$, we define
    \begin{equation*}
        \CD_{m}^{} \defgl 
        \bigl\{
            w \in \CA_{2}^{\ast} \mid w \text{ is } \zeta_{m}^{} \text{-legal}
        \bigr\} 
        \quad \text{and} \quad \CD_{m,\ell}^{} \defgl 
        \bigl\{
            w \in \CD_{m}^{} \mid \lvert w \rvert = \ell
        \bigr\}.
    \end{equation*}
    If $w \steq \zeta_{m}^{k}(\letter{b})$ for some $k \in \IN_{0}^{}$, we
    refer to $w$ as an \emph{exact substitution word} and define for any $k \ge
    1$ the set of exact substitution words (of order $k$) as
    \begin{equation*}
        \CG_{m,k}^{} \defgl 
        \bigl\{ 
            w \in \CA_{2}^{\ast} \mid w \steq \zeta_{m}^{k-1}(\letter{b}) 
        \bigr\}.
    \end{equation*}
    A convenient approach to the set of exact RNMS words is the following 
    concatenation rule. For $k \ge 3$, let
    \begin{equation} \label{equ:concat_rule}
        \CG_{m,k}^{} \defgl \bigcup_{i=0}^{m} \prod_{j=0}^{m}
        \CG_{m,k-1-\delta_{ij}^{}}^{}
        \quad \text{with}\quad
        \CG_{m,1}^{} \defgl \{ \letter{b} \}
        \quad \text{and} \quad
        \CG_{m,2}^{} \defgl \{ \letter{a} \},
    \end{equation}
    where $\delta_{ij}^{}$ denotes the \emph{Kronecker function}. The product
    in Eq.~$\eqref{equ:concat_rule}$ is understood via the concatenation of
    words and each word $w \in \CG_{m,k}^{}$ is of length $\ell_{m,k}^{} \defgl
    m\ell_{m,k-1}^{} + \ell_{m,k-2}$ with $\ell_{m,1}^{} \defgl 1 \gldef
    \ell_{m,2}^{}$. Obviously, not all legal words of length $\ell_{m,k}^{}$
    are exact (e.g. $\letter{aa}$, $\letter{bb} \in \CD_{m,2}^{} \setminus
    \CG_{m,3}^{}$).
    \begin{figure}
       \centering
       \includegraphics[scale=0.6]{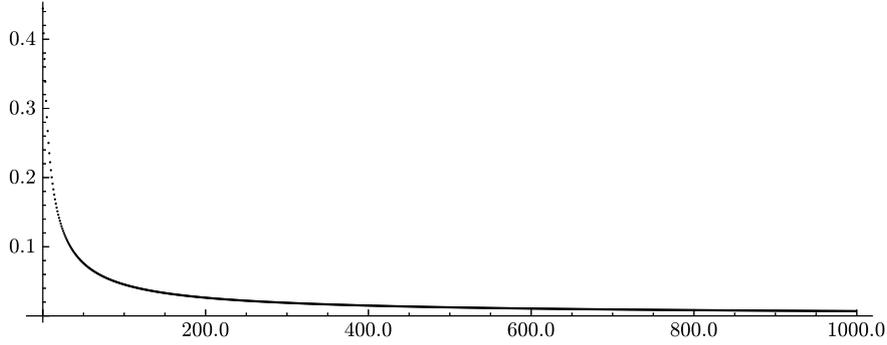}
       \caption{The topological entropy $\mathcal{H}_{m}^{}$ for $1 \le m \le
           1000$.}
        \label{fig:entropy}
    \end{figure}
    
    The set of exact RNMS words facilitates a convenient method for the
    computation of the topological entropy. Applying a theorem of Nilsson
    \cite[Thm.~3]{nilsson} and carrying out a short calculation, concerning the
    cardinalities of exact RNMS sets, yields the following result
    \cite[Sec.~3.2]{moll} for the topological entropy $\mathcal{H}_{m}^{}$ in
    the RNMS case.
    \begin{align*}
        \mathcal{H}_{m}^{} &= \lim_{k \to \infty} \frac{\log \bigl(\lvert
        \CG_{m,k}^{} \rvert \bigr)}{\ell_{m,k}^{}} = \frac{\lambda_{m}^{} -
        1}{1 - \lambda_{m}^{\prime}} \sum_{i = 2}^{\infty} \frac{\log \bigl(m(i
        - 1) + 1\bigr)}{\lambda_{m}^{i}},
    \end{align*}
    which is strictly positive. This is in contrast to the deterministic cases
    of $\CN_{m}^{}$ where each element of $\IX_{m,i}^{}$ is a \emph{Sturmian
    sequence} \cite[Prop.~3.2]{moll} which means that the topological entropy
    vanishes here.
	
	\section{Ergodicity}
	
    In this section, we define a shift-invariant probability measure on the
    discrete RNMS hull $\IX_{m}^{}$ and prove its ergodicity. The result is
    somewhat weaker as in all deterministic cases of $\CN_{m}^{}$, because it
    is known that the hulls of primitive substitutions are \emph{minimal} and
    that there is a \emph{uniquely} ergodic probability measure \cite{queff}.
    As $\IX_{m,i}^{} \subsetneq \IX_{m}^{}$ \cite[Prop.~2.22]{moll}, one
    directly observes the non-minimality of $\IX_{m}^{}$ and the non-uniqueness
    of the measure can be expected immediately and will be proved explicitly
    later.
    \begin{defi} \label{def:induced_subst}
        Let $\ell \in \IN$ and $\zeta_{m}^{} \colon \CA_{2}^{\ast} \longto
        \CA_{2}^{\ast}$ be a random noble means substitution for some fixed $m
        \in \IN$. Then, we refer to
        \begin{equation*}
            ( \zeta_{m}^{} )_{\ell}^{} \colon \CD_{m,\ell}^{\ast} \longto
            \CD_{m,\ell}^{\ast}
        \end{equation*}
        as the \emph{induced substitution} defined by
        \begin{equation*}
            ( \zeta_{m}^{} )_{\ell}^{} \colon w^{(i)} \longmapsto
            \left\{
            \begin{array}{cc}
                u^{(i,1)} \defgl \Bigl( v^{(i,1)}_{[k,k + \ell - 1]} \Bigr)_{0 \le
                k \le \lvert \zeta_{m}^{}(w_{0}^{(i)}) \rvert - 1 },
                &\text{with prob. } p_{i1}^{}, \\
                \vdots & \vdots \\
                u^{(i,n_{i})} \defgl \Bigl( v^{(i,n_i)}_{[k,k + \ell - 1]}
                \Bigr)_{0 \le k \le \lvert \zeta_{m}^{}(w_{0}^{(i)}) \rvert - 1},
                &\text{with prob. } p_{in_{i}^{}}^{}, \\
            \end{array}
            \right.
        \end{equation*}
        where $w^{(i)} \in \CD_{m,\ell}^{}$ and $v^{(i,j)} \in \CD_{m}^{}$ is
        an image of $w^{(i)}$ under $\zeta_{m}^{}$ with probability
        $p_{ij}^{}$.
    \end{defi}
    One can show that the induced substitution matrix $M_{m,\ell}^{}$ of
    $(\zeta_{m}^{})_{\ell}^{}$ is primitive \cite[Prop.~4.7]{moll} which
    enables the reapplication of Perron--Frobenius theory. Note that
    $(\zeta_{m}^{})_{1}^{} = \zeta_{m}^{}$ and therefore $M_{m,1}^{} =
    M_{m}^{}$. In the case of $\ell = 2$, one can explicitly work out
    $M_{m,2}^{}$ for arbitrary $m \in \IN$ \cite[Prop.~4.10]{moll} and proceed
    recursively for the generalisation to any word length $\ell \in \IN$
    \cite[Cor.~4.13]{moll}. One finds
    \begin{equation*}
        M_{m,2}^{} =
        \begin{pmatrix*}
            m-1+p_{0}^{}p_{m}^{}   & m-1+p_{0}^{}   & 1-p_{0}^{} & 1 \\
            1-p_{0}^{}p_{m}^{}     & 1-p_{0}^{}     & p_{0}^{}   & 0 \\
            1-p_{0}^{}p_{m}^{}     & 1              & 0          & 0 \\
            p_{0}^{}p_{m}^{}       & 0              & 0          & 0
        \end{pmatrix*},
    \end{equation*}
    with statistically normalised right PF eigenvector
    \begin{equation} \label{equ:induced_right_pf}
        \mathbold{R}_{m,2}^{} =
        \begin{pmatrix*}
            \frac{2(\lambda_{m}^{} - 1)}{m(1+p_{0}^{} p_{m}^{}) - (2 +
            2\lambda_{m}^{} - m)(-1 + p_{0}^{} p_{m}^{})} \\
            \frac{2(1 - p_{0}^{} p_{m}^{})}{m(1+p_{0}^{} p_{m}^{}) - (2 +
            2\lambda_{m}^{} - m)(-1 + p_{0}^{} p_{m}^{})}    \\
            \frac{2(1 - p_{0}^{} p_{m}^{})}{m(1+p_{0}^{} p_{m}^{}) - (2 +
            2\lambda_{m}^{} - m)(-1 + p_{0}^{} p_{m}^{})}    \\
            \frac{2(1 + \lambda_{m}^{\prime}) p_{0}^{} p_{m}^{}}{m(1+p_{0}^{}
            p_{m}^{}) - (2 + 2\lambda_{m}^{} - m)(-1 + p_{0}^{} p_{m}^{})}
        \end{pmatrix*}.
    \end{equation}
    Now, let $w \in \CD_{m,\ell}^{}$ be any
    $\zeta_{m}^{}$-legal word. Then, we define the measure $\mu_{m}^{}$ on the
    cylinder sets $\CZ_{k}^{}(w)$ by
    \begin{equation} \label{equ:ergodic_meas}
        \mu_{m}^{}
        \bigl( 
            \CZ_{k}^{}(w)
        \bigr) \defgl \mathbold{R}_{m,\ell}^{}(w),
    \end{equation}
    for any $k \in \IZ$, where $\mathbold{R}_{m,\ell}^{}(w)$ is the entry of
    the statistically normalised right PF eigenvector of $M_{m,\ell}^{}$ with
    respect to the word $w$. According to \cite[Sec.~5.4]{queff}, this is a
    consistent definition of a measure on $\FZ(\IX_{m}^{})$ and there is an
    extension of $\mu_{m}^{}$ to the Borel $\sigma$-algebra $\FB_{m}^{}$
    \cite[Cor.~2.4.9]{partha} generated by the cylinder sets. Due to
    \cite[Prop.~2.5.1]{partha}, this extension is unique and we will denote it
    again as $\mu_{m}^{}$. Note that Eq.~$\eqref{equ:induced_right_pf}$
    indicates that $\mu_{m}^{}$ depends on the choice of $\mathbold{p}_{m}^{}$,
    whereas the hull $\IX_{m}^{}$ is invariant under alterations of the
    choosing probabilities as long as $\mathbold{p}_{m}^{} \gg 0$. The same is
    true for any $\ell \in \IN$ which means that there are infinitely many
    possibilities to construct a probability measure for the very same
    $\IX_{m}^{}$ in the above way. We proceed with an important ingredient for
    the proof of the ergodicity of $\mu_{m}^{}$.
    
    \begin{thm}[{\cite[Thm.~1]{etemadi}}]
        \label{thm:slln}
        Let $(X_{i}^{})_{i \in \IN}^{}$ be a family of pairwise independent,
        identically distributed, complex random variables with common
        distribution $\mu$, subject to the integrability condition
        $\IE_{\mu}^{}(\lvert X_{1}^{} \rvert) < \infty$. Then,
        \begin{equation*}
            \frac{1}{n} \sum_{i = 1}^{n} X_{i}^{} \;\xrightarrow[\text{a.s.}]{n
            \to \infty}\; \IE_{\mu}^{}(X_{1}^{}) = \int_{\IR} x \dif \mu(x).
            \tag*{\qed}
        \end{equation*}
    \end{thm}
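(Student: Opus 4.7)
The plan is to follow Etemadi's classical truncation argument, whose whole point is that \emph{variance of a sum is additive already under pairwise independence}, so that no higher moments or full mutual independence are required. First I would reduce to the real, non-negative case: splitting each $X_{i}^{}$ into real and imaginary parts and then into positive and negative parts preserves pairwise independence and identical distribution, and it suffices to establish the conclusion for each piece separately. So I continue to write $(X_{i}^{})_{i \in \IN}^{}$ for a sequence of non-negative, pairwise independent, identically distributed random variables with $\IE_{\mu}^{}(X_{1}^{}) < \infty$.

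The second step is truncation. Set $Y_{i}^{} \defgl X_{i}^{} \mathbbm{1}_{\{ X_{i}^{} \le i \}}^{}$. Since $\sum_{i \ge 1} \mu(X_{1}^{} > i) \le \IE_{\mu}^{}(X_{1}^{}) < \infty$, the first Borel--Cantelli lemma yields $X_{i}^{} = Y_{i}^{}$ eventually, almost surely, so it is enough to control $\frac{1}{n} \sum_{i=1}^{n} Y_{i}^{}$. Monotone convergence gives $\IE_{\mu}^{}(Y_{i}^{}) \nearrow \IE_{\mu}^{}(X_{1}^{})$, and the Cesàro lemma then reduces the problem to the almost sure convergence $\frac{1}{n} \sum_{i=1}^{n} \bigl( Y_{i}^{} - \IE_{\mu}^{}(Y_{i}^{}) \bigr) \to 0$.

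The heart of the argument is a variance estimate along a geometric subsequence. Under pairwise independence one has $\IV_{\mu}^{}\bigl( \sum_{i=1}^{n} Y_{i}^{} \bigr) = \sum_{i=1}^{n} \IV_{\mu}^{}(Y_{i}^{}) \le \sum_{i=1}^{n} \IE_{\mu}^{}(Y_{i}^{2})$, and using the layer-cake identity $\IE_{\mu}^{}(Y_{i}^{2}) = \int_{0}^{i} 2x \, \mu(X_{1}^{} > x) \dif x$ followed by Fubini, one checks $\sum_{i \ge 1} \IE_{\mu}^{}(Y_{i}^{2}) / i^{2} \le C \cdot \IE_{\mu}^{}(X_{1}^{}) < \infty$. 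Fix $\alpha > 1$ and set $k_{n}^{} \defgl \lfloor \alpha^{n} \rfloor$. Chebyshev's inequality, combined with the above summability, allows a Borel--Cantelli application that gives $\frac{1}{k_{n}^{}} \sum_{i=1}^{k_{n}^{}} \bigl( Y_{i}^{} - \IE_{\mu}^{}(Y_{i}^{}) \bigr) \to 0$ almost surely, hence $S_{k_{n}^{}}^{} / k_{n}^{} \to \IE_{\mu}^{}(X_{1}^{})$ almost surely, where $S_{m}^{} \defgl \sum_{i=1}^{m} Y_{i}^{}$.

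It remains to fill the gaps. Since $Y_{i}^{} \ge 0$, the sequence $(S_{m}^{})$ is monotone, so for $k_{n}^{} \le m < k_{n+1}^{}$ the sandwich $\frac{k_{n}^{}}{k_{n+1}^{}} \cdot \frac{S_{k_{n}^{}}^{}}{k_{n}^{}} \le \frac{S_{m}^{}}{m} \le \frac{k_{n+1}^{}}{k_{n}^{}} \cdot \frac{S_{k_{n+1}^{}}^{}}{k_{n+1}^{}}$ holds. As $k_{n+1}^{}/k_{n}^{} \to \alpha$, this forces $\alpha^{-1} \IE_{\mu}^{}(X_{1}^{}) \le \liminf_{m} S_{m}^{}/m \le \limsup_{m} S_{m}^{}/m \le \alpha \IE_{\mu}^{}(X_{1}^{})$ almost surely, and letting $\alpha \downarrow 1$ along a countable sequence concludes the proof. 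The main obstacle is the variance summability estimate, which has to be executed carefully with Fubini; once that bound is in hand, everything else is routine Borel--Cantelli and Cesàro bookkeeping.
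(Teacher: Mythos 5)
Your argument is correct and is precisely Etemadi's original proof (reduction to the non-negative case, truncation, variance additivity under pairwise independence, Chebyshev plus Borel--Cantelli along the geometric subsequence $k_{n}^{}=\lfloor\alpha^{n}\rfloor$, and the monotone sandwich with $\alpha\downarrow 1$). The paper states this theorem without proof, simply citing \cite[Thm.~1]{etemadi}, so your reconstruction coincides with the intended source and there is nothing to add.
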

    
    Here, $\IE_{\mu}^{}(X)$ denotes the \emph{mean} of the random variable $X$
    with respect to the distribution $\mu$.
    
	\begin{pro} \label{pro:ergodic}
        For an arbitrary but fixed $m \in \IN$, let $\IX_{m}^{} \subset
        \CA_{2}^{\IZ}$ be the two-sided discrete stochastic hull of the random
        noble means substitution and $\mu_{m}^{}$ be the $S$-invariant
        probability measure on $\IX_{m}^{}$ introduced in
        Eq.~$\eqref{equ:ergodic_meas}$. For any $f \in
        L^{1}(\IX_{m}^{},\mu_{m}^{})$ and for an arbitrary but fixed $s \in
        \IZ$, the identity
	    \begin{equation} \label{equ:ergodic}
            \lim_{n \to \infty} \frac{1}{n} \sum_{i = s}^{n + s - 1} f(S^{i}x)
            = \int_{\IX_{m}^{}}f \dif \mu_{m}^{}
	    \end{equation}
	    holds for $\mu_{m}^{}$-almost every $x \in \IX_{m}^{}$.
	\end{pro}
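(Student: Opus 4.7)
The plan is to recognise Eq.~\eqref{equ:ergodic} as the Birkhoff ergodic theorem applied to the measure-preserving system $(\IX_{m}^{},S,\mu_{m}^{})$, so that the actual content is the ergodicity of $\mu_{m}^{}$. Since finite linear combinations of cylinder indicators $\mathbbm{1}_{\CZ_{0}^{}(v)}^{}$ with $v \in \CD_{m,\ell}^{}$ are dense in $L^{1}(\IX_{m}^{},\mu_{m}^{})$, and since $\mu_{m}^{}$ is $S$-invariant, it is enough to verify Eq.~\eqref{equ:ergodic} for $s = 0$ and $f = \mathbbm{1}_{\CZ_{0}^{}(v)}^{}$ with $v$ legal. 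For such an $f$, the left-hand side is the relative frequency of $v$ as a subword in the window $x_{[0,n+\ell-2]}^{}$, while the right-hand side equals $\mathbold{R}_{m,\ell}^{}(v)$ by the definition of $\mu_{m}^{}$ in Eq.~\eqref{equ:ergodic_meas}. The task therefore reduces to showing that, for every legal $v$, this relative frequency converges $\mu_{m}^{}$-almost surely to $\mathbold{R}_{m,\ell}^{}(v)$.

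To obtain this almost-sure convergence I would identify $\mu_{m}^{}$ with the push-forward of the product Bernoulli measure that governs the independent substitution choices used to build elements of $X_{m}^{}$. A generic $x \in \IX_{m}^{}$ is a shift of a bi-infinite accumulation point of $\zeta_{m}^{k}(\letter{a}\refpnt\letter{a})$, and at every level the image of each individual occurrence of $\letter{a}$ is selected independently according to $\mathbold{p}_{m}^{}$. For a fixed legal $v$ of length $\ell$ and large $k$, the number of occurrences of $v$ in $\zeta_{m}^{k}(\letter{a}\refpnt\letter{a})$ decomposes as a large sum of i.i.d.\ contributions — one per cell inherited from the previous level — plus boundary corrections whose total number grows only like $\ell_{m,k-1}^{}$. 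An application of Theorem~\ref{thm:slln} to these i.i.d.\ counts, combined with the primitivity of the induced substitution matrix $M_{m,\ell}^{}$ and Perron--Frobenius asymptotics to pin down the properly normalised limit, then yields that the relative frequency of $v$ in $\zeta_{m}^{k}(\letter{a}\refpnt\letter{a})$ converges almost surely to $\mathbold{R}_{m,\ell}^{}(v)$. Because $\ell_{m,k+1}^{}/\ell_{m,k}^{} \to \lambda_{m}^{}$ is bounded, a sandwich between the two nearest lengths $\ell_{m,k}^{}$ transfers the convergence to all window lengths $n$.

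The main obstacle I expect is the bookkeeping that transports almost-sure statements on the probability space of substitution choices to $\mu_{m}^{}$-almost-sure statements on $\IX_{m}^{}$. The consistency argument implicit in the definition of $\mu_{m}^{}$ on $\FZ(\IX_{m}^{})$ must be matched with the Bernoulli construction of generating random noble means words so that both probabilistic frameworks genuinely describe the same measure on cylinder sets. Once this match is established, one still has to control the boundary effects between consecutive substitution cells (which can produce occurrences of $v$ straddling two adjacent images $\zeta_{m,i}^{}(\letter{a})$ and are in principle not i.i.d.\ across cell boundaries) and then aggregate countably many null sets — one for each legal $v$ — into a single $\mu_{m}^{}$-null set off of which Eq.~\eqref{equ:ergodic} holds for every cylinder indicator simultaneously.
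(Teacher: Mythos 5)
Your reduction to cylinder indicators and the closing density argument match the paper, and you correctly identify Etemadi's strong law (Theorem~\ref{thm:slln}) as the engine. But the way you deploy it is genuinely different from the paper's, and your version has a gap that is more than bookkeeping. The paper never leaves the hull: it fixes $t$, $\ell$ and, for each residue $i$ modulo $\ell+m$, treats $\bigl(\mathbbm{1}_{\CZ_{t}^{}(v)}^{}(S^{i+k(\ell+m)}x)\bigr)_{k\in\IN_{0}}^{}$ as a family of pairwise independent, identically $\mu_{m}^{}$-distributed random variables --- the spacing $\ell+m$ guarantees, via a one-step desubstitution $x=\zeta_{m}^{}(x^{\prime})$, that two such windows never meet the image of the same letter $\letter{a}$ of $x^{\prime}$ --- applies the strong law to each of the $\ell+m$ subfamilies separately, and recombines the resulting means. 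The mean of each subfamily is then read off from the Perron--Frobenius eigenvector, i.e.\ directly from the definition of $\mu_{m}^{}$ in Eq.~\eqref{equ:ergodic_meas}. No auxiliary probability space and no push-forward is needed.

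Your route instead runs the strong law on the Bernoulli space of substitution choices, controlling occurrence counts of $v$ in $\zeta_{m}^{k}(\letter{a}\refpnt\letter{a})$, and then tries to transport the conclusion to $(\IX_{m}^{},\mu_{m}^{})$. The transport step is the gap, and it is not mere bookkeeping. The push-forward of the product measure under ``realization $\mapsto$ limit of $\zeta_{m}^{k}(\letter{a}\refpnt\letter{a})$'' is concentrated on the generating words $X_{m}^{}$ and is not shift-invariant, so it is not $\mu_{m}^{}$; the latter is defined intrinsically via the PF data of $M_{m,\ell}^{}$ and lives on the much larger hull $\IX_{m}^{}$. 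To relate the two you would need a Krylov--Bogolyubov-type average of shifted push-forwards converging to $\mu_{m}^{}$, and almost-everywhere statements do not pass through weak-$\ast$ limits; alternatively you would have to show that $\mu_{m}^{}$-a.e.\ $x$ carries a full hierarchy of desubstitutions distributed according to the Bernoulli choices, which is essentially the independence structure the paper exploits directly on the hull. There is also a latent circularity: identifying the cylinder masses of your push-forward construction with $\mathbold{R}_{m,\ell}^{}(v)$ is precisely the almost-sure frequency statement you set out to prove. Either supply this identification as a separate lemma (together with the boundary-effect and sandwiching estimates you mention), or switch to the paper's strategy of applying the strong law to the shifts of a single $\mu_{m}^{}$-random point along arithmetic progressions of gap $\ell+m$.
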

	
	\begin{proof}
        Let $x \in \IX_{m}^{}$ be an arbitrary element of the stochastic hull.
        The idea is to consider the characteristic function
        $\mathbbm{1}_{\CZ}^{}$ of some cylinder set $\CZ \in \FZ(\IX_{m}^{})$
        and to interpret $X \defgl \bigl( \mathbbm{1}_{\CZ}^{}(S^{i}x)
        \bigr)_{i \in \IN}^{}$ as a family of $\mu_{m}^{}$-distributed random
        variables in order to invoke Theorem~\ref{thm:slln}. For this purpose,
        we have to deal with the pairwise independence of elements in $X$. One
        can show that there is at least one element $x^{\prime} \in \IX_{m}^{}$
        with $\zeta_{m}^{}(x^{\prime}) \steq x$ \cite[Rem.~2.25]{moll} which
        means that we can study the structure of $x$ that is induced by the
        action of $\zeta_{m}^{}$ on some element of $\IX_{m}^{}$. For two
        finite subwords $u$, $v \in \CD_{m,\ell}^{}$ of $x$, we denote by $u
        \Cap v$ the overlap of $u$ and $v$ in $x$ and by $\lvert u \Cap v
        \rvert$ its number of letters. Certainly, $u$ and $v$ cannot be
        independent if $\lvert u \Cap v \rvert > 0$, but we have to take more
        into account. Possibly, $u$ and $v$ may contain parts of the image of
        the same letter under $\zeta_{m}^{}$. As $\lvert
        \zeta_{m}^{}(\letter{a}) \rvert = m + 1 > 1 = \lvert
        \zeta_{m}^{}(\letter{b}) \rvert$, it is sufficient to ensure that at
        most one of the overlaps $u \Cap \zeta_{m}^{}(\letter{a})$ and $v \Cap
        \zeta_{m}^{}(\letter{a})$ is non-empty for the very same letter
        $\letter{a} \subw x^{\prime}$, as illustrated in
        Figure~\ref{fig:independence_overlap}.
	    \begin{figure}[t]
	        \centering
	        \begin{tikzpicture}[scale = 0.85]
	        \node[] (1) at (-5.0, 1.6) {};
	        \node[] (2) at ( 5.0, 2.0) {};
	        \node[] (3) at (-1.0, 1.6) {};
	        \node[] (4) at (-1.0, 2.0) {};
	        \node[] (5) at ( 0.0, 2.0) {};
	        \node[] (6) at ( 0.0, 1.6) {};
	        \node[] (7) at (-5.0, 0.0) {};
	        \node[] (8) at ( 5.0, 0.4) {};
	        \node[] (9) at (-4.0, 0.0) {};
	        \node[](10) at (-4.0, 0.4) {};
	        \node[](11) at (-1.0, 0.4) {};
	        \node[](12) at (-1.0, 0.0) {};
	        \node[](13) at ( 1.0, 0.0) {};
	        \node[](14) at ( 1.0, 0.4) {};
	        \node[](15) at ( 4.0, 0.0) {};
	        \node[](16) at ( 4.0, 0.4) {};
	        \node[thick](18) at (-5.5, 1.8) {$\ldots$};
	        \node[thick](19) at (-5.5, 0.2) {$\ldots$};
	        \node[thick](20) at ( 5.5, 1.8) {$\ldots$};
	        \node[thick](21) at ( 5.5, 0.2) {$\ldots$};
	        \fill[lightgray] (1) rectangle (2);
	        \fill[black]     (1) rectangle (4);
	        \fill[black]     (6) rectangle (2);
	        \fill[lightgray] (7) rectangle (8);
	        \fill[black]     (7) rectangle (10);
	        \fill[black]    (12) rectangle (14);
	        \fill[black]    (15) rectangle (8);
	        \fill[lightgray] (-1.0, 1.6) -- (-1.5, 0.4) -- ( 0.5, 0.4) -- ( 0.0, 1.6)
	        -- (-1.0, 1.6);
	        \node[](22) at (-0.5, 1.8) {$\letter{a}$};
	        \node[](23) at (-6.5, 1.8) {};
	        \node[](24) at (-6.5, 0.2) {};
	        \node[](25) at (-2.5, 0.2) {$u$};
	        \node[](25) at ( 2.5, 0.2) {$v$};
	        \node[](26) at ( 0.0,-0.7) {$m$};
	        \node[](27) at ( 6.45, 1.85) {$x^{\prime}$};
	        \node[](27) at ( 6.6, 0.2) {$\zeta_{m}^{}(x^{\prime})$};
	        \draw[thick,decorate,decoration={brace,amplitude=7pt}] ( 1.0,-0.1) -- 
	        (-1.0, -0.1);
	        \path[thick,-stealth] (23) edge node[left] {$\zeta_{m}^{}$} (24);
	        \end{tikzpicture}
	        \caption[Independence of words after a sufficiently large shift.]{The
                words $u$, $v \in \CD_{m,\ell}^{}$ are independent as of the shift by
                $\ell + m$ positions. The word $\zeta_{m}^{}(\letter{a})$ can have
                non-empty overlap with precisely one of the two words.}
            \label{fig:independence_overlap}
	    \end{figure}
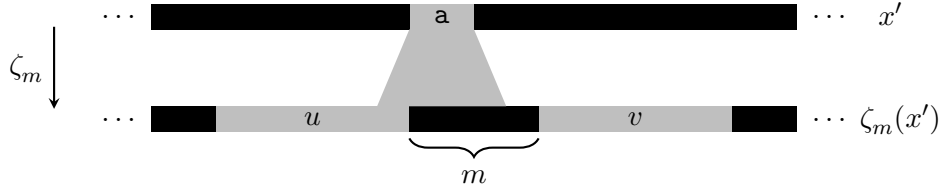
        Now, define for any $i \in \IZ$, $\ell \in \IN$ and a fixed $t \in
        \IZ$, the family
	    \begin{equation*}
	        (X_{i,k}^{})_{k \in \IN_{0}^{}}^{} \defgl
	        \Bigl(
                \bigl( 
                    S^{i+k(\ell + m)}x 
                \bigr)_{[t,t + \ell - 1]}
	        \Bigr)_{k \in \IN_{0}^{}}^{}.
	    \end{equation*}
        Then, each $X \in \bigl\{ (X_{i,k}^{})_{k \in \IN_{0}}^{} \mid s \le i
        \le \ell + m + s - 1 \bigr\}$ consists of pairwise independent words in the
        sense pointed out above. Furthermore, for any $v \in \CD_{m,\ell}^{}$, we
        consider the characteristic function of the cylinder set
        $\CZ_{t}^{}(v)~\in~ \FZ(\IX_{m}^{})$, defined by
	    \begin{equation*} 
	        \mathbbm{1}_{\CZ_{t}^{}(v)}^{}(x) \defgl
	        \left\{
	            \begin{array}{ll}
	                1, &\text{if } x_{[t,t + \ell - 1]}^{} = v, \\
	                0, &\text{otherwise.}
	            \end{array}
	        \right.
	    \end{equation*}
	    This leads to
	    \begin{align}
            \lim_{n \to \infty} \frac{1}{n} &\sum_{i = s}^{n + s - 1}
            \mathbbm{1}_{\CZ_{t}^{}(v)}^{}(S^{i}x) \notag\\
            &= \lim_{n \to \infty} \frac{1}{n} \sum_{i = s}^{\ell + m + s - 1}
            \sum_{k = 0}^{\lfloor \frac{n - 1 - i}{\ell + m} \rfloor}
            \mathbbm{1}_{\CZ_{t}^{}(v)}^{} \bigl( S^{i + k(\ell + m)}x \bigr)
            \notag\\
            &= \lim_{n \to \infty} \frac{1}{\ell + m} \sum_{i = s}^{\ell + m +
            s - 1} \frac{1}{\lfloor \frac{n - 1 - i}{\ell + m} \rfloor + 1}
            \sum_{k = 0}^{\lfloor \frac{n - 1 - i}{\ell + m} \rfloor}
            \mathbbm{1}_{\CZ_{t}^{}(v)}^{} \bigl( S^{i + k(\ell + m)}x \bigr).
            \label{equ:slln_apply}
            \intertext{For $s \le i \le \ell + m + s - 1$, we consider the
                family $\bigl( \mathbbm{1}_{\CZ_{t}^{}(v)}^{}(S^{i + k(\ell +
                m)}x) \bigr)_{k \in \IN_{0}}^{}$ and apply
                Theorem~\ref{thm:slln} to each of the inner sums of
                Eq.~$\eqref{equ:slln_apply}$ separately and appropriately put
                the resulting means together. Thus,
                Eq.~$\eqref{equ:slln_apply}$ is almost surely}
            &= \frac{1}{\ell + m} \sum_{i = s}^{\ell + m + s - 1}
            \IE_{\mu_{m}}\bigl( \mathbbm{1}_{\CZ_{t}^{}(v)}^{}(S^{i}x) \bigr) =
            \IE_{\mu_{m}}\bigl( \mathbbm{1}_{\CZ_{t}^{}(v)}^{}(x) \bigr)
            \notag\\
            &= \int_{\IX_{m}^{}}\mathbbm{1}_{\CZ_{t}^{}(v)}^{} \dif \mu_{m}^{}.
            \notag
	    \end{align}
        Note that the penultimate equality is implied by the Perron--Frobenius
        Theorem and the uniqueness of $\mathbold{R}_{m,\ell}^{}$ stated therein.
	    
	    To finish the proof, we need to extend the presented arguments to an
        arbitrary function in $L^{1}(\IX_{m}^{},\mu_{m}^{})$. We define
	    \begin{equation*}
	        \Gamma \defgl 
	        \Bigl\{
	            \sum_{\CZ \in S} a_{\CZ}^{} \mathbbm{1}_{\CZ}^{} \mid S \subset 
	            \FZ(\IX_{m}^{}) 
	            \text{ finite and } a_{\CZ}^{} \in \IC
	        \Bigr\}
	    \end{equation*}
        as the set of simple functions on the measure space $\bigl(
        \IX_{m}^{},\FB_{m}^{},\mu_{m}^{} \bigr)$. By linearity, the validity of
        Eq.~$\eqref{equ:ergodic}$ for $\mathbbm{1}_{\CZ_{t}^{}(v)}^{}$ extends
        to an arbitrary function in $\Gamma$. Due to the Stone--Weierstraß
        theorem \cite[Thm.~1.4]{lang}, $\Gamma$ is dense in $\CC(\IX_{m}^{})$
        and thus also in $L^{1}(\IX_{m}^{},\mu_{m}^{})$ \cite[Thm.~3.1]{lang}.
        This implies the assertion.
	\end{proof}
	
	\begin{thm} 
	    The measure $\mu_{m}^{}$ is ergodic.
	\end{thm}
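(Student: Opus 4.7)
The plan is to deduce ergodicity as an almost immediate consequence of Proposition~\ref{pro:ergodic}, which already supplies the full almost-sure Birkhoff-type convergence for every $f \in L^{1}(\IX_{m}^{},\mu_{m}^{})$. The strategy rests on the standard characterisation that $\mu_{m}^{}$ is ergodic precisely when every $S$-invariant set $A \in \FB_{m}^{}$ satisfies $\mu_{m}^{}(A) \in \{0,1\}$.

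First I would fix an arbitrary $S$-invariant $A \in \FB_{m}^{}$. Since $S^{-1}A = A$, one has $\mathbbm{1}_{A}^{}(S^{i}x) = \mathbbm{1}_{A}^{}(x)$ for all $i \in \IZ$ and all $x \in \IX_{m}^{}$, and $\mathbbm{1}_{A}^{}$ belongs to $L^{1}(\IX_{m}^{},\mu_{m}^{})$ because $\mu_{m}^{}$ is a probability measure. Applying Proposition~\ref{pro:ergodic} to $f = \mathbbm{1}_{A}^{}$ with $s = 0$, the left-hand side of Eq.~$\eqref{equ:ergodic}$ collapses to the constant value $\mathbbm{1}_{A}^{}(x)$ for every $n$, while the right-hand side equals $\mu_{m}^{}(A)$. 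Hence $\mathbbm{1}_{A}^{}(x) = \mu_{m}^{}(A)$ for $\mu_{m}^{}$-almost every $x \in \IX_{m}^{}$.

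Since $\mathbbm{1}_{A}^{}$ only takes the values $0$ and $1$, the last identity forces $\mu_{m}^{}(A) \in \{0,1\}$, which is exactly ergodicity. There is no serious obstacle left at this step: all the substantial analytic work — the invocation of Etemadi's strong law of large numbers on the independent blocks furnished by the preimage structure under $\zeta_{m}^{}$, the Perron--Frobenius identification of the mean of $\mathbbm{1}_{\CZ_{t}^{}(v)}^{}$ with $\mathbold{R}_{m,\ell}^{}(v)$, and the Stone--Weierstra\ss\ density argument lifting the assertion from cylinder indicators to arbitrary $L^{1}$ functions — has already been absorbed into the proof of Proposition~\ref{pro:ergodic}.
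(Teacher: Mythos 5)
Your proof is correct and follows essentially the same route as the paper, which likewise deduces the theorem directly from Proposition~\ref{pro:ergodic} (the paper simply cites that proposition together with Birkhoff's ergodic theorem, without writing out the deduction). Your version via the invariant-set characterisation --- noting that for $S$-invariant $A$ the Birkhoff averages of $\mathbbm{1}_{A}^{}$ collapse to $\mathbbm{1}_{A}^{}(x)$, forcing $\mu_{m}^{}(A) \in \{0,1\}$ --- makes the ``immediate consequence'' explicit and in fact dispenses with Birkhoff's theorem altogether.
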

	
	\begin{proof}
	    This is an immediate consequence of Proposition~\ref{pro:ergodic}. via an 
	    application of Birkhoff's ergodic theorem.
	\end{proof}
	
	\section{Cut and project}
    
    The geometric realisation of fixed points of elements in $\CN_{m}^{}$ is
    derived from the left PF eigenvector $(\lambda_{m}^{},1)^{T}$ of $M_{m}^{}$
    via the identification of $\letter{a}$ and $\letter{b}$ with intervals of
    lengths $\lambda_{m}^{}$ and $1$ and using the left endpoints as
    coordinates. Each of these realisations is called a \emph{noble means set}
    and is denoted by $\Lambda_{m,i}^{}$. It can be shown \cite[Cor.~5.17 and
    Cor.~5.18]{moll} that all $\Lambda_{m,i}^{}$ can be identified as so-called
    \emph{model sets} $\Theta(W_{m,i}^{})$ with \emph{windows} $W_{m,i}^{}$
    within the cut and project scheme $\mathfrak{C} \defgl
    (\IR,\IR,\CL_{m}^{})$; see Figure~\ref{fig:nms_cps} for a compact
    representation and we refer to \cite[Cha.~7]{bagrimm} for a general
    introduction. The underlying lattice $\CL_{m}^{} \defgl \{(x,x^{\prime})
    \mid x \in \IZ[\lambda_{m}^{}] \}$ is independent of $i$. Note that, for
    the generic cases $0 < i < m$, we find the windows
    \begin{equation} \label{equ:window_1}
        W_{m,i}^{} = i\tau_{m}^{} + [\lambda_{m}^{\prime},1]
        \quad \text{with} \quad
        \tau_{m}^{} \defgl -\frac{1}{m} (\lambda_{m}^{\prime} + 1).
    \end{equation}
    In the singular cases $i \in \{0,m\}$, we get
    \begin{alignat}{3}
        W_{m,0}^{(\letter{a}|\letter{a})} &\defgl [\lambda_{m}^{\prime},1) , & 
        W_{m,0}^{(\letter{a}|\letter{b})}
        &\defgl (\lambda_{m}^{\prime},1], \label{equ:window_2} \\
        \quad W_{m,m\vphantom{0}}^{(\letter{a}|\letter{a})} &\defgl
        (-1,-\lambda_{m}^{\prime}], & \quad 
        W_{m,m\vphantom{0}}^{(\letter{b}|\letter{a})} &\defgl
        [-1,-\lambda_{m}^{\prime}), \label{equ:window_3}
    \end{alignat}
    distinguished according to the legal two-letter seeds. In the randomised 
    situation, we consider the geometric realisation of generating random noble 
    means words and study the same cut and project scheme $\mathfrak{C}$ as in the 
    deterministic cases. In this context, we find the following result.

    \begin{figure}[t]
        \centering
        \begin{tikzpicture}[description/.style={fill=white,inner sep=7pt}]
        \matrix (D) [matrix of math nodes, row sep=2em,column sep=4.5em, text
        height=1.5ex, text depth=0.2ex,minimum width=2em,nodes in empty cells]
        { 
            \makebox[1cm]{$\IR$} & \makebox[1cm]{$\IR \times \IR$} &
            \makebox[1cm]{$\IR$}              \\
            \makebox[1cm]{$\IZ[\lambda_{m}^{}]$} & 
            \makebox[1cm]{$\mathcal{L}_{m}^{}$}  &
            \makebox[1cm]{$\IZ[\lambda_{m}^{}]$} \\ 
            \makebox[1cm]{$L$} &  & \makebox[1cm]{$L^{\star}$} \\ };
        \path[-stealth]
        (D-1-2) edge node[above] {$\pi_{1}^{}$} (D-1-1)
        (D-1-2) edge node[above] {$\pi_{2}^{}$} (D-1-3)
        (D-2-2) edge node[above] {$1-1$} (D-2-1)
        (D-2-2) edge node[above] {$1-1$} (D-2-3)
        (D-3-1) edge node[above] {$\star$} (D-3-3);
        \path[-] 
        (D-3-1) edge[double] (D-2-1)
        (D-3-3) edge[double] (D-2-3);
        \path[] 
        (D-2-1) edge node[description] {\rotatebox{90}{$\subset$}} (D-1-1)
        (D-2-2) edge node[description] {\rotatebox{90}{$\subset$}} (D-1-2)
        (D-2-3) edge node[description] {\rotatebox{90}{$\subset$}} (D-1-3);
        \node (1) at ( 3.8,0.7) {{\small dense}};
        \node (2) at (-3.8,0.7) {{\small dense}};
        \end{tikzpicture}
        \caption{Cut and project scheme for the noble means sets $\Lambda_{m,i}^{}$.}
        \label{fig:nms_cps}
    \end{figure}
	
	\begin{pro} \label{pro:windows_random}
        Let $\Lambda_{m}^{}$ be a generating random noble means set.  Then,
        $\Lambda_{m}^{} \subset \Theta\bigl( W_{m}^{} \bigr)$ with $W_{m}^{}
        \defgl [\lambda_{m}^{\prime}-1,1-\lambda_{m}^{\prime}]$.
	\end{pro}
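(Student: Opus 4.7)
The plan is to bound the Galois conjugate (star image) of every letter-position in a generating random noble means word by a direct induction on the iteration depth, tracking $\letter{a}$-positions and $\letter{b}$-positions separately. First, I would fix a geometric realisation of the seed $\letter{a}\refpnt\letter{a}$ so that the two $\letter{a}$-tiles occupy $[-\lambda_{m}^{},0]$ and $[0,\lambda_{m}^{}]$, giving the initial left-endpoint stars $\{0,-\lambda_{m}^{\prime}\}$. Each iteration of $\zeta_{m}^{}$ inflates positions: a letter at position $x$ is replaced by sub-letters at $\lambda_{m}^{}x+c$ whose stars are $\lambda_{m}^{\prime}x^{\star}+c^{\star}$. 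Enumerating the offsets arising in $\zeta_{m,i}^{}(\letter{a})=\letter{a}^{i}\letter{b}\letter{a}^{m-i}$ as $i$ ranges over $\{0,\ldots,m\}$, the stars of offsets feeding an $\letter{a}$-sub-tile lie in $D_{a}^{}\defgl\{k\lambda_{m}^{\prime}:0\le k\le m-1\}\cup\{k\lambda_{m}^{\prime}+1:0\le k\le m-1\}$, while those feeding a $\letter{b}$-sub-tile lie in $D_{b}^{}\defgl\{k\lambda_{m}^{\prime}:0\le k\le m\}$; the rule $\zeta_{m}^{}(\letter{b})=\letter{a}$ contributes only the offset $0$.

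The core claim I would prove by induction on $k$ is that the set $A_{k}^{}$ of stars of $\letter{a}$-positions in the iterate $\zeta_{m}^{k}(\letter{a}\refpnt\letter{a})$ is contained in $I_{a}^{}\defgl[-1,1-\lambda_{m}^{\prime}]$, and the corresponding set $B_{k}^{}$ for $\letter{b}$-positions is contained in $I_{b}^{}\defgl[\lambda_{m}^{\prime}-1,-\lambda_{m}^{\prime}]$. The base case is immediate since $A_{0}^{}\subset I_{a}^{}$ and $B_{0}^{}=\emptyset$. The inductive step would reduce to three Minkowski-type inclusions: (i) $\lambda_{m}^{\prime}I_{a}^{}+D_{a}^{}\subset I_{a}^{}$, (ii) $\lambda_{m}^{\prime}I_{b}^{}\subset I_{a}^{}$, and (iii) $\lambda_{m}^{\prime}I_{a}^{}+D_{b}^{}\subset I_{b}^{}$. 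Each is an endpoint check that exploits the negativity of $\lambda_{m}^{\prime}$ (which flips interval orientation under multiplication) together with the quadratic identity $(\lambda_{m}^{\prime})^{2}=m\lambda_{m}^{\prime}+1$. For instance, one computes $\lambda_{m}^{\prime}I_{a}^{}=[-(m-1)\lambda_{m}^{\prime}-1,-\lambda_{m}^{\prime}]$, whence (iii) actually holds with equality, pinning down the full interval $I_{b}^{}$ on the $\letter{b}$-side.

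Since $I_{a}^{}\cup I_{b}^{}=W_{m}^{}$, every star value appearing in any finite iterate lies in the closed set $W_{m}^{}$. Every letter in a generating random noble means word occurs in some finite iterate $\zeta_{m}^{k}(\letter{a}\refpnt\letter{a})$, so passage to the accumulation point in the product topology preserves the position-to-star correspondence, yielding $\Lambda_{m}^{}\subset\Theta(W_{m}^{})$. I expect the main technical obstacle to be the arithmetic of the three inclusions, where the sign of $\lambda_{m}^{\prime}$ and the quadratic identity must be applied at precisely the right moments for the bookkeeping to close; the separate tracking of $\letter{a}$- and $\letter{b}$-stars is essential, since treating all positions uniformly would only give the loose bound coming from the unconstrained IFS attractor, which is strictly larger than $W_{m}^{}$.
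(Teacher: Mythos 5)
Your argument is correct, and it reaches the same six endpoint inequalities that drive the paper's proof, but it organizes them differently. The paper works backwards: it posits an unknown decomposition $W_{m}^{} = A \cup B$ into an $\letter{a}$-window and a $\letter{b}$-window, writes down the invariance conditions these must satisfy under the conjugated inflation (your inclusions (i)--(iii) are exactly that system, after taking the union over $i$), and then \emph{solves} the resulting linear optimisation problem for the minimal admissible intervals before verifying the solution. You instead posit $I_{a}^{} = [-1,1-\lambda_{m}^{\prime}]$ and $I_{b}^{} = [\lambda_{m}^{\prime}-1,-\lambda_{m}^{\prime}]$ outright and run a forward induction from the seed $\letter{a}\refpnt\letter{a}$. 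Your version is the more self-contained proof of the stated inclusion: it makes explicit the base case and the step from ``the intervals are invariant'' to ``every point of $\Lambda_{m}^{}$ has its star in $W_{m}^{}$'', which the paper's proof leaves implicit (it never anchors the invariance argument at the seed). What the paper's route buys is the extra information that $W_{m}^{}$ is \emph{minimal} among invariant windows --- not claimed in the proposition, but used implicitly to motivate why this particular interval is the right covering model set. The arithmetic checks in your step (the sign flip under multiplication by $\lambda_{m}^{\prime} \in (-1,0)$ and the identity $(\lambda_{m}^{\prime})^{2} = m\lambda_{m}^{\prime}+1$) all close correctly, and your observation that the $\letter{a}$/$\letter{b}$ split is essential --- a single uniform window would not be invariant --- is precisely the point of the paper's decomposition $\Theta(A) \cup \Theta(B)$.
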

	\begin{proof}
        Assume there is a set $W_{m}^{} = A \cup B$ in the internal space with
        the property $\Lambda_{m}^{} \subset \Theta(W_{m}^{}) = \Theta(A) \cup
        \Theta(B)$. Here, the sets $\Theta(A)$ and $\Theta(B)$ denote the left
        endpoints of intervals generated by the letters $\letter{a}$ and
        $\letter{b}$, respectively.  If $\Lambda_{m}^{}$ is a generating random
        noble means set, the same is true for $\zeta_{m}^{}(\Lambda_{m}^{})$,
        and the sought-after sets $\Theta(A)$ and $\Theta(B)$ are invariant
        under $\zeta_{m}^{}$.  Now, consider $x \in \Lambda_{m}^{}$ and note
        that the interval $[0,x]$ is always mapped to the interval
        $\lambda_{m}^{} \cdot [0,x]$. The sets $\Theta(A)$ and $\Theta(B)$ are
        consequently invariant under $\zeta_{m}^{}$ if and only if for all $0
        \le i \le m$ the inclusions
	    \begin{equation*}
            \zeta_{m,i}^{}\bigl( \Theta(A) \bigr) \subset \Theta(A) \quad
            \text{and} \quad \zeta_{m,i}^{}\bigl( \Theta(B) \bigr) \subset
            \Theta(B)
	    \end{equation*}
        hold. As conditions in the physical space, we get for $0 \le i \le m$
        the $m+1$ systems
	    \begin{align*}
	        \Theta(A) &\supset
	        \biggl\{ 
	            \bigcup_{j=0}^{i-1}\lambda_{m}^{} \Theta(A) + j\lambda_{m}^{}
	        \biggr\} 
	        \cup \lambda_{m}^{} \Theta(B) \cup 
	        \biggl\{
	            \bigcup_{j=i}^{m-1}\lambda_{m}^{} \Theta(A) + j\lambda_{m}^{}
	            + 1
	        \biggr\} \\
	        \Theta(B) &\supset \lambda_{m}^{} \Theta(A) + i\lambda_{m}^{} 
	    \end{align*}
	    and in the internal space the corresponding conjugate systems
	    \begin{equation} \label{equ:superwindow_conditions}
	        \begin{aligned}
	            A &\supset 
	            \biggl\{ 
	                \bigcup_{j=0}^{i-1}\lambda^{\prime}_{m} A +
	                j\lambda^{\prime}_{m} 
	            \biggr\} 
	            \cup \lambda_{m}^{\prime} B \cup 
	            \biggl\{
	                \bigcup_{j=i}^{m-1}\lambda^{\prime}_{m} A +
	                j\lambda^{\prime}_{m} + 1 
	            \biggr\} \\
	            B &\supset \lambda^{\prime}_{m} A + i\lambda^{\prime}_{m} 
	            .
	        \end{aligned}
	    \end{equation}
        As only affine maps appear in Eq.~$\eqref{equ:superwindow_conditions}$,
        it suffices to investigate the extremal cases $i=0$ and $i=m$.
        Furthermore, we can assume that $A$ and $B$ are closed intervals,
        because if $C \in \{A,B\}$ satisfies all conditions of
        Eq.~$\eqref{equ:superwindow_conditions}$ and is no interval, then
        define $\overline{C} = [\inf C,\sup C]$. As all involved maps are
        affine, $\overline{C}$ also meets these conditions and we may define $A
        \defgl [\alpha,\beta]$ and $B \defgl [\gamma,\delta]$. Among the
        remaining conditions of Eq.~$\eqref{equ:superwindow_conditions}$, only
        the following six are not redundant:
	    \begin{equation*}
	        \begin{array}{lll}
	            (1) \; \lambda_{m}^{\prime}\bigl( \beta+(m-1) \bigr) \ge \alpha &
	            \;(2) \; \lambda_{m}^{\prime}\delta \ge \alpha & \;(3) \;
	            \lambda_{m}^{\prime}\gamma \le \beta \\
	            (4) \; \lambda_{m}^{\prime}(\beta + m) \ge \gamma & \;(5) \;
	            \lambda_{m}^{\prime}\alpha+1 \le \beta & \;(6) \;
	            \lambda_{m}^{\prime}\alpha \le \delta.
	        \end{array}
	    \end{equation*}
	    Because of Eqs.~$\eqref{equ:window_1}$ to $\eqref{equ:window_3}$, we may
        assert the relative position $\gamma < \alpha \le \delta < \beta$ of $A$ to
        $B$. This appears to be a linear optimisation problem, which is not
        uniquely solvable in general. Consequently, we additionally demand that the
        interval $W_{m}^{} = [\gamma,\beta]$ be minimal, which leads to the
        condition $\lambda_{m}^{\prime}(\beta + m) = \gamma$.  This equation
        describes the largest translation to the left and if
        $\lambda_{m}^{\prime}(\beta + m) > \gamma$, the length of $W_{m}^{}$ was
        not minimal. By solving the linear optimisation problem of
        Eq.~$\eqref{equ:superwindow_conditions}$ under consideration of all given
        boundary conditions, we get the intervals
	    \begin{equation*}
	        A = [-1,1-\lambda_{m}^{\prime}], \quad B =
	        [\lambda_{m}^{\prime}-1,-\lambda_{m}^{\prime}] \quad \text{and} \quad 
	        W_{m}^{}
	        = [\lambda_{m}^{\prime}-1,1-\lambda_{m}^{\prime}].
	    \end{equation*}
	    These intervals actually satisfy Eq.~$\eqref{equ:superwindow_conditions}$,
	    because for $i=m$ we get
	    \begin{align*}
	        \biggl\{ 
	            \bigcup_{j=0}^{m-1}\lambda_{m}^{\prime}A +
	            j\lambda_{m}^{\prime} 
	        \biggr\} 
	        \cup \lambda_{m}^{\prime}B
	        &= [-(\lambda_{m}^{\prime})^2 +
	           m\lambda_{m}^{\prime},-\lambda_{m}^{\prime}] \\
	        &\qquad\cup 
               [-1-m\lambda_{m}^{\prime},1+(m-1)\lambda_{m}^{\prime}] \\
	        &= [-1,-\lambda_{m}^{\prime}] \cup
	           [-1-m\lambda_{m}^{\prime},1+(m-1)\lambda_{m}^{\prime}] \\
	        &  \subset [-1,1-\lambda_{m}^{\prime}] = A
	        \intertext{and}
	        \lambda_{m}^{\prime}A + m\lambda_{m}^{\prime}
	        &= [-(\lambda_{m}^{\prime})^2 +
	           (m+1)\lambda_{m}^{\prime},(m-1)\lambda_{m}^{\prime}] \\
	        &= [\lambda_{m}^{\prime}-1,(m-1)\lambda_{m}^{\prime}] \\
	        &  \subset [\lambda_{m}^{\prime}-1,-\lambda_{m}^{\prime}] = B.
	    \end{align*}
	    Analogously, we get the corresponding inclusions for $i=0$. Furthermore,
	    the minimality condition of $W_{m}^{}$ is fulfilled because
	    \begin{equation*}
            \lambda_{m}^{\prime}(\beta+m) =
            \lambda_{m}^{\prime}(1-\lambda_{m}^{\prime}+m) =
            \lambda_{m}^{\prime}-1 = \gamma. \qedhere
	    \end{equation*}
	\end{proof}
    
    Henceforth, we indicate the \emph{continuous random noble means hull} by 
    $\IY_{m}^{}$ and denote any element in $\IY_{m}^{}$ as a \emph{random noble 
    means set}. We refer to \cite[Cha.~5]{moll} for a broader overview in this 
    regard.
	
	\begin{thm} \label{thm:meyer}
	    Each random noble means set $\Lambda \in \IY_{m}^{}$ is Meyer.
	\end{thm}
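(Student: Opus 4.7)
The strategy is to apply the standard characterisation (due to Meyer and Moody) that a Delone set is Meyer if and only if it is contained in a model set whose window is relatively compact. Proposition~\ref{pro:windows_random} already supplies such a containment for every generating random noble means set, with the compact interval window $W_{m}^{} = [\lambda_{m}^{\prime}-1, 1-\lambda_{m}^{\prime}]$. What remains is to verify the Delone property for every $\Lambda \in \IY_{m}^{}$ and to promote the Meyer property from generating sets to arbitrary elements of the continuous hull.

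First, I would check that each $\Lambda \in \IY_{m}^{}$ is Delone. Uniform discreteness with gap at least $1$ follows from the geometric realisation, where $\letter{a}$ and $\letter{b}$ carry intervals of lengths $\lambda_{m}^{} > 1$ and $1$, respectively, so consecutive points differ by at least $\min(1,\lambda_{m}^{}) = 1$. Relative density follows from the complementary bound $\max(1,\lambda_{m}^{}) = \lambda_{m}^{}$ on consecutive gaps. Both estimates are stable under translations and under local (Chabauty--Fell) limits, hence persist in the full hull $\IY_{m}^{}$.

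Next, for a generating random noble means set $\Lambda_{m}^{}$, Proposition~\ref{pro:windows_random} yields $\Lambda_{m}^{} \subset \Theta(W_{m}^{})$ with $W_{m}^{}$ a compact interval. Since the star map is an additive homomorphism on $\IZ[\lambda_{m}^{}]$, this immediately gives
\begin{equation*}
    \Lambda_{m}^{} - \Lambda_{m}^{} \;\subset\; \Theta(W_{m}^{}) - \Theta(W_{m}^{}) \;\subset\; \Theta(W_{m}^{} - W_{m}^{}),
\end{equation*}
and $W_{m}^{} - W_{m}^{} = [2\lambda_{m}^{\prime}-2, 2-2\lambda_{m}^{\prime}]$ is again a compact interval. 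Standard cut and project theory then guarantees that $\Theta(W_{m}^{} - W_{m}^{})$ is uniformly discrete, so $\Lambda_{m}^{} - \Lambda_{m}^{}$ is uniformly discrete. Together with relative density from the previous step, this establishes the Meyer property for every generating random noble means set.

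Finally, I would extend this to an arbitrary $\Lambda \in \IY_{m}^{}$. By definition, $\IY_{m}^{}$ is the local-topology closure of the translates of generating sets, so $\Lambda$ can be approximated patch by patch on arbitrarily large compacta by translates of generating sets $\Lambda_{m}^{}$. Uniform discreteness of $\Lambda - \Lambda$ with minimum spacing bounded below by the constant obtained in the previous paragraph is a local property preserved both under translation and under such local limits, so it transfers to $\Lambda$. Combined with the Delone property from step one, this yields the Meyer property. The main subtlety I anticipate is in this last step: one must argue carefully that the uniform lower bound on the minimal spacing of the difference sets $\Lambda_{m}^{} - \Lambda_{m}^{}$ survives the Chabauty--Fell limit, which is standard but requires a brief justification via the finite local complexity already implicit in the containment in a model set with compact window.
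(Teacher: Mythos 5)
Your proposal is correct and follows essentially the same route as the paper: establish the Meyer property for generating sets via Proposition~\ref{pro:windows_random} together with Moody's characterisation of relatively dense subsets of model sets, then transfer it to all of $\IY_{m}^{}$ using the translation invariance of the difference set $\Lambda_{m}^{} - \Lambda_{m}^{}$ and its stability under local limits. The only cosmetic difference is that you unwind the model-set step explicitly through $\Lambda_{m}^{} - \Lambda_{m}^{} \subset \Theta(W_{m}^{} - W_{m}^{})$, where the paper simply cites the corresponding theorem of Moody.
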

	
	\begin{proof}
        Let $\Lambda_{m}^{}$ be a generating random noble means set. Evidently,
        $\Lambda_{m}^{}$ is relatively dense in $\IR$ with covering radius
        $\lambda_{m}^{}/2$ and, by Proposition~\ref{pro:windows_random}, it is
        a subset of the model set $\Theta\bigl( [\lambda_{m}^{\prime} - 1, 1 -
        \lambda_{m}^{\prime}] \bigr)$. The Meyer property of $\Lambda_{m}^{}$
        then follows from \cite[Thm.~9.1]{moo}. We know that there is a
        generating random noble means set whose orbit is dense,
        $\Lambda_{m}^{}$ say. Now, choose an arbitrary random noble means set
        $\Lambda \in \IY_{m}^{}$ and a converging sequence $(t_{n}^{} +
        \Lambda_{m}^{})_{n \in \IN}^{}$ with limit $\Lambda$. For any $n \in
        \IN$, we find
	    \begin{equation*}
            (t_{n}^{} + \Lambda_{m}^{}) - (t_{n}^{} + \Lambda_{m}^{}) =
            \Lambda_{m}^{} - \Lambda_{m}^{}
	    \end{equation*}
        and therefore $\Lambda - \Lambda \subset \Lambda_{m}^{} -
        \Lambda_{m}^{}$ which means that $\Lambda$ is uniformly discrete. As
        the relative denseness of $\Lambda$ is clear, this proves the
        assertion.
	\end{proof}
	
	\section{Diffraction measure}
    
    In this last section, we present some results concerning the spectral
    nature of the diffraction measure of typical random noble means sets. We
    refer to \cite[Chs.~8~and~9]{bagrimm} for a detailed and readable
    introduction to diffraction theory of model sets; compare \cite{bagrimm}.
    
    To begin with, we briefly discuss the deterministic cases of $\CN_{m}^{}$
    that can be treated with results from the general theory.
	
	\begin{lem}\label{cor:diffr_det}
        For an arbitrary but fixed $m \in \IN$ and $0 \le i \le m$, the
        diffraction measure of $\Lambda_{m,i}^{}$ is a positive and positive
        definite, translation bounded, pure point measure. It is explicitly
        given by
	    \begin{equation} \label{equ:diffr_det}
            \widehat{\gamma_{\Lambda_{m,i}}^{}} = \sum_{k \in
            \CL_{m}^{\circledast}} \lvert A_{m,i}^{}(k) \rvert^{2} \,
            \delta_{k}^{},
	    \end{equation}
	    with the amplitudes
	    \begin{equation*}
	        A_{m,i}^{}(k) = \dens(\Lambda_{m,i}^{}) \e^{-\pi\II k^{\star}
	        (\lambda_{m}^{\prime} + 1)(1-2i/m)} \sinc\bigl( \pi k^{\star}
	        (1-\lambda_{m}^{\prime}) \bigr).
	    \end{equation*}
	\end{lem}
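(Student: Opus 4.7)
The plan is to apply the general pure point diffraction theorem for regular Euclidean model sets (see \cite[Ch.~9]{bagrimm}) to the explicit model set description of $\Lambda_{m,i}^{}$ in the cut and project scheme $\mathfrak{C}$, and then to work out the resulting amplitude as an elementary Fourier transform of the characteristic function of the window $W_{m,i}^{}$.

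First, I would invoke the model set identification $\Lambda_{m,i}^{} = \Theta(W_{m,i}^{})$ established in Corollaries~5.17 and~5.18 of \cite{moll} that are quoted in the excerpt. Since each $W_{m,i}^{}$ is a bounded interval in the internal space, its topological boundary consists of two points and therefore has Lebesgue measure zero, so $\Lambda_{m,i}^{}$ is a \emph{regular} model set. The general diffraction theorem then yields, with no further effort, that $\widehat{\gamma_{\Lambda_{m,i}}^{}}$ is positive, positive definite, translation bounded, and pure point, of precisely the form stated in Eq.~$\eqref{equ:diffr_det}$ with amplitudes
\[
    A_{m,i}^{}(k) \;=\; \frac{\dens(\Lambda_{m,i}^{})}{\vol(W_{m,i}^{})}\,
    \widehat{\mathbbm{1}_{W_{m,i}^{}}^{}}(-k^{\star}),
    \qquad k \in \CL_{m}^{\circledast}.
\]

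Second, I would carry out the Fourier computation. For generic $0 < i < m$, Eq.~$\eqref{equ:window_1}$ gives $W_{m,i}^{} = i\tau_{m}^{} + [\lambda_{m}^{\prime},1]$, an interval of length $\vol(W_{m,i}^{}) = 1-\lambda_{m}^{\prime}$ centred at $c_{m,i}^{} = \tfrac{1}{2}(\lambda_{m}^{\prime}+1)(1-2i/m)$. Since the Fourier transform of the characteristic function of an interval factors as a complex exponential carrying the centre times a sinc carrying the half-width, the factor $\vol(W_{m,i}^{})$ cancels and one lands on the stated amplitude $\dens(\Lambda_{m,i}^{})\,\e^{-\pi\II k^{\star}(\lambda_{m}^{\prime}+1)(1-2i/m)}\sinc\bigl(\pi k^{\star}(1-\lambda_{m}^{\prime})\bigr)$. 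The singular cases $i \in \{0,m\}$ from Eqs.~$\eqref{equ:window_2}$ and $\eqref{equ:window_3}$ differ from the generic ones only in which endpoints belong to the window, which is irrelevant for the Lebesgue integration, so the same computation applies verbatim and produces the same amplitude.

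In sum, the lemma is essentially a direct application of existing machinery: the structural assertions (pure pointedness, positivity, translation boundedness) are consequences of the general theorem, and the only genuine computation is the Fourier transform of the characteristic function of an interval. No serious obstacle is expected; the single point requiring care is to align the Fourier-sign convention with the chosen definition of the $\star$-map on $\IZ[\lambda_{m}^{}]$, since these two choices together fix the sign appearing in the phase factor.
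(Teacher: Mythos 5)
Your proposal is correct and follows essentially the same route as the paper: invoke the general model set diffraction theorem to obtain the pure point structure with amplitude $\dens(\Lambda_{m,i}^{})\,\widehat{\mathbbm{1}_{W_{m,i}^{}}}(-k^{\star})/\vol(W_{m,i}^{})$, then evaluate the Fourier transform of the characteristic function of the interval window, with the volume factor cancelling. Your observations that the half-open windows in the singular cases are irrelevant for the Lebesgue integration and that the phase sign hinges on the Fourier and $\star$-map conventions are both apt.
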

	\begin{proof}
	    To begin with, we note that the Fourier transform of the characteristic
	    function of an interval $[a,b] \subset \IR$ can be represented as
	    \begin{equation} \label{equ:transform_char_func}
	        \widehat{\mathbbm{1}_{[a,b]}^{}}(x) = (b-a) \e^{-\pi\II x(a+b)}
	        \sinc\bigl( \pi x (b-a) \bigr),
	    \end{equation}
	    where $\sinc(z) \defgl \sin(z)/z$. A short calculation based on
        \cite[Thm.~1]{schlottmann} yields $\dens(\Lambda_{m,i}^{}) = (1 -
        \lambda_{m}^{\prime}) / \sqrt{m^{2} + 4}$. Combining this with
        \cite[Thm.~9.4]{bagrimm} and Eqs.~$\eqref{equ:window_1}$ to
        $\eqref{equ:window_3}$, we find
	    \begin{align*}
	        A_{m,i}^{}(k) &= \frac{\dens(\Lambda_{m,i}^{})}{\vol(W_{m,i}^{})}
	        \widehat{\mathbbm{1}_{W_{m,i}^{}}} (-k^{\star}) \\
	        &= \frac{(1 - \lambda_{m}^{\prime}) \e^{-\pi\II k^{\star}
	        (\lambda_{m}^{\prime} + 1)(1 - 2i/m)} \sinc\bigl( \pi k^{\star} (1 -
	        \lambda_{m}^{\prime}) \bigr)}{\sqrt{m^{2}+4}}\\ 
	        &= \dens(\Lambda_{m,i}^{}) \e^{-\pi\II k^{\star} (\lambda_{m}^{\prime} +
	        1)(1 - 2i/m)} \sinc\bigl( \pi k^{\star} (1 - \lambda_{m}^{\prime})
	        \bigr),
	    \end{align*}
	    by an application of Eq.~$\eqref{equ:transform_char_func}$.
	\end{proof}
    \medskip
	In the stochastic situation, we first have to take a closer look at the 
	\emph{autocorrelation} $\gamma_{\Lambda,m}^{}$ of any $\Lambda \in \IY_{m}^{}$, 
	which is defined by
    \begin{equation*}
        \gamma_{\Lambda,m}^{} \defgl \delta_{\Lambda}^{} \circledast 
        \widetilde{\delta_{\Lambda}^{}} \defgl \lim_{n \to \infty} 
        \frac{\delta_{\Lambda_{n}} \ast 
        \widetilde{\delta_{\Lambda_{n}}}}{\vol(B_{n})}
        \quad \text{with} \quad
        \Lambda_{n}^{} \defgl B_{n}^{}(0) \cap \Lambda.
    \end{equation*} 
    Via regularisation of $\delta_{\Lambda}^{}$ and an application of the ergodic 
    theorem for continuous functions \cite[Thm.~2.14z]{muri}, we find that 
    \begin{equation*}
        \gamma_{\Lambda,m}^{} = \IE_{\nu_{m}}^{}\bigl(\delta_{\Lambda}^{} 
        \circledast 
        \widetilde{\delta_{\Lambda}^{}} \bigr) 
    \end{equation*}    
    with $\nu_{m}^{}$ the measure induced by \emph{suspension}
    (\cite[Cha.~11]{cornfeld} and \cite[Sec.~6.1]{moll}) of $\mu_{m}^{}$. Here,
    $\gamma_{\Lambda,m}^{}$ is positive definite by construction and its Fourier
    transform exists due to \cite[Sec.~4]{berg}. We find
	\begin{align} 
	    \widehat{\gamma_{\Lambda,m}^{}} 
        &= \bigl( \IE_{\nu_{m}}^{}(\delta_{\Lambda}^{} \circledast
        \widetilde{\delta_{\Lambda}^{}}) \bigr)^{\widehat{}} = \lim_{n \to
        \infty} \IE_{\nu_{m}}^{}\Bigl(\frac{1}{\vol(B_{n}^{})}
        \widehat{\delta_{\Lambda_{n}^{}}^{}}
        \widehat{\widetilde{\delta_{\Lambda_{n}^{}}^{}}} \Bigr) \notag\\ 
        &= \lim_{n \to \infty} \frac{1}{\vol(B_{n}^{})} \IE_{\nu_{m}}^{}\Bigl(
        \widehat{\delta_{\Lambda_{n}^{}}^{}}
        \overline{\widehat{\delta_{\Lambda_{n}^{}}^{}}} \Bigr) = \lim_{n \to
        \infty} \frac{1}{\vol(B_{n}^{})} \IE_{\nu_{m}}^{}(\lvert X_{n}^{}
        \rvert^{2}) \notag \\
        &= \lim_{n \to \infty} \frac{1}{\vol(B_{n}^{})} \lvert
        \IE_{\nu_{m}}^{}(X_{n}^{}) \rvert^{2} + \lim_{n \to \infty}
        \frac{1}{\vol(B_{n}^{})} \bigl( \IE_{\nu_{m}}^{}(\lvert X_{n}^{}
        \rvert^{2}) - \lvert \IE_{\nu_{m}}^{}(X_{n}^{}) \rvert^{2} \bigr)
        \notag\\
        &= \lim_{n \to \infty} \frac{1}{\vol(B_{n}^{})} \lvert
        \IE_{\nu_{m}}^{}(X_{n}^{}) \rvert^{2} + \lim_{n \to \infty}
        \frac{1}{\vol(B_{n}^{})}
        \IV_{\nu_{m}}^{}(X_{n}^{}),\label{equ:diffr_rek}
	\end{align}
    where $\IV_{\nu_{m}}^{}(X_{n}^{})$ is the variance of \[X_{n}^{}(k) \defgl
    \sum_{x \in \Lambda_{n}} \e^{-2\pi\II kx} = \sum_{x \in \Lambda_{n}}
    \widehat{\delta_{x}^{}},\] provided that all limits exist. The idea of breaking
    up $\widehat{\gamma_{\Lambda,m}^{}}$ according to first and second moments
    will result in $\lim_{n \to \infty} \lvert
    \IE_{\nu_{m}}^{}(X_{n}^{}) \rvert^{2} / \vol(B_{n}^{})$ containing the pure
    point part and $\lim_{n \to \infty}
    \IV_{\nu_{m}}^{}(X_{n}^{})/\vol(B_{n}^{})$ being the absolutely continuous part
    of $\widehat{\gamma_{\Lambda,m}^{}}$. In the following, we will restrict to the
    special case of $m=1$ and consider suitable subsequences to ensure the
    convergence in Eq.~$\eqref{equ:diffr_rek}$. The general case of $m \in \IN$ can
    be treated similarly.
    
    For $n \ge 2$, we define the sequence
    \begin{equation*}
        L_{n}^{} \defgl L_{n-1}^{} + L_{n-2}^{} \quad \text{with} \quad 
        L_{0}^{} \defgl 1 \quad \text{and} \quad
        L_{1}^{} \defgl \lambda_{1}^{}
    \end{equation*}
    that possesses the closed form $L_{n}^{} = \lambda_{1}^{n}$ for any $n \in \IN$
    and furthermore, we set
    \begin{equation}\label{equ:random_var}
        X_{n}^{}(k) \defgl
        \left\{
            \begin{array}{ll}
                X_{n-2}^{}(k) + \e^{-2\pi\II k L_{n-2}^{}} X_{n-1}^{}(k),
                &\text{with probability }p_{0}^{}, \\
                X_{n-1}^{}(k) + \e^{-2\pi\II k L_{n-1}^{}} X_{n-2}^{}(k),
                &\text{with probability }p_{1}^{},
            \end{array}
        \right.
    \end{equation}
    where $X_{0}^{}(k) \defgl \e^{-2\pi\II k}$ and $X_{1}^{}(k) \defgl
    \e^{-2\pi\II k\lambda_{1}^{}}$. Moreover, we define the sequences 
    \begin{equation} \label{equ:con_subseqs}
        \bigl( 
            \CP_{n}^{}
        \bigr)_{n \in \IN_{0}}^{} \defgl 
        \Bigl( 
            \frac{1}{L_{n}^{}} \lvert \IE(X_{n}^{}) \rvert^{2} 
        \Bigr)_{n \in \IN_{0}}^{}
        \quad \text{and} \quad
        \bigl(
            \CS_{n}^{}
        \bigr)_{n \in \IN_{0}}^{} \defgl
        \Bigl(
            \frac{1}{L_{n}^{}} \IV(X_{n}^{})
        \Bigr)_{n \in \IN_{0}},
    \end{equation}
    and derive results on the convergence of $(\CP_{n}^{})_{n \in \IN_{0}}^{}$
    and $(\CS_{n}^{})_{n \in \IN_{0}}^{}$.
	
    We proceed with the derivation of recursion formulas for
    $\IE\bigl(X_{n}^{}(k)\bigr)$ and $\IV\bigl(X_{n}^{}(k)\bigr)$. For the sake
    of readability, we introduce the following abbreviations.
	\begin{equation} \label{equ:abbrevs}
	    \begin{gathered}
	        \e_{n}^{} \defgl \e^{-2\pi\II kL_{n}^{}}, \quad \cos_{n}^{} \defgl
	        \cos(2\pi kL_{n}^{}), \quad X_{n}^{} \defgl X_{n}^{}(k), \\ \IE_{n}^{}
	        \defgl \IE\bigl( X_{n}^{}(k) \bigr) \quad \text{and} \quad \IV_{n}^{}
	        \defgl \mathbb{E}\bigl( \bigl\lvert X_{n}^{}(k) \bigr\rvert^{2} \bigr) -
	        \bigl\lvert \mathbb{E}\bigl( X_{n}^{}(k) \bigr) \bigr\rvert^{2},
	    \end{gathered}
	\end{equation}
	for any $n \in \IN$ and $k \in \IR$. Using the definition of $X_{n}^{}$ in
	Eq.~$\eqref{equ:random_var}$, it is immediate that for $n \ge 2$, we have
	\begin{equation} \label{equ:pp_rek}
	    \begin{aligned}
	        \IE_{n}^{} &= \IE \bigl( p_{0}^{} (X_{n-2}^{} + \e_{n-2}^{}
	        X_{n-1}^{} ) + p_{1}^{} (X_{n-1}^{} + \e_{n-1}^{} X_{n-2}^{}
	        ) \bigr)  \\
	        &= (p_{1}^{} + p_{0}^{} \e_{n-2}^{})\IE_{n-1}^{} + (p_{0}^{} + p_{1}^{}
	        \e_{n-1}^{})\IE_{n-2}^{},
	    \end{aligned}
	\end{equation}
    where $\IE_{0}^{} = \mathrm{e}^{-2\pi\mathrm{i}k}$ and $\IE_{1}^{} =
    \mathrm{e}^{-2\pi\mathrm{i}k\lambda_{1}^{}}$. Firstly, we consider the
    sequence $(\CS_{n}^{})_{n \in \IN_{0}}^{}$. Applying
    Eq.~$\eqref{equ:pp_rek}$ for any $n \ge 2$, we find 
	\begin{align*}
	    \IV_{n}^{} &= \IE 
	    \bigl(
	        p_{0}^{} \lvert X_{n-2}^{} + \e_{n-2}^{} X_{n-1}^{} \rvert^{2} + 
	        p_{1}^{} \lvert X_{n-1}^{} + \e_{n-1}^{} X_{n-2}^{} \rvert^{2}
	    \bigr) - \lvert \IE_{n}^{} \rvert^{2} \\
	    &= \IV_{n-1}^{} + \IV_{n-2}^{} \\
	    &\qquad + 2p_{0}^{}p_{1}^{} \Bigl\{ 
	    \bigl(1 - \cos_{n-2}^{} \bigr) \lvert \IE_{n-1}^{} \rvert^{2}  + \bigl(1 -
	    \cos_{n-1}^{} \bigr) \lvert \IE_{n-2}^{} \rvert^{2} \\
	    &\qquad\qquad -\Rea \bigl[\bigl( 1 - \overline{\e_{n-1}^{}} - \e_{n-2}^{} +\,
	    \overline{\e_{n-1}^{}}\e_{n-2}^{} \bigr) \IE_{n-1}^{} \overline{\IE_{n-2}^{}}
	    \bigr] \Bigr\} \\
	    &\qquad + 2\Rea \bigl[ \bigl(p_{0}^{} \e_{n-2}^{} + p_{1}^{}
	    \overline{\e_{n-1}^{}} \bigr) \bigl( \IE(X_{n-1}^{}
	    \overline{X_{n-2}^{}}) - \IE_{n-1}^{} \overline{\IE_{n-2}^{}} \bigr) \bigr]
	    \tag{$\ast$} \label{hin:indep}\\
	    &= \IV_{n-1}^{} + \IV_{n-2}^{} + 2p_{0}^{}p_{1}^{} \Psi_{n}^{},
	\end{align*}
	with $\IV_{0}^{} = \IV_{1}^{} = 0$ and
	\begin{align}
        \Psi_{n}^{} \defgl \Psi_{n}^{}(k) &\defgl \bigl(1 - \cos_{n-2}^{}
        \bigr) \lvert \IE_{n-1}^{} \rvert^{2} + \bigl(1 - \cos_{n-1}^{} \bigr)
        \lvert \IE_{n-2}^{} \rvert^{2} \notag \\
        &\qquad -\Rea \bigl[\bigl( 1 - \overline{\e_{n-1}^{}} - \e_{n-2}^{} +
        \overline{\e_{n-1}^{}}\e_{n-2}^{} \bigr) \IE_{n-1}^{}
        \overline{\IE_{n-2}^{}} \bigr] \notag\\
        &= \frac{1}{2}\bigl\lvert (1-\e_{n-2}^{}) \IE_{n-1}^{} -
        (1-\e_{n-1}^{}) \IE_{n-2}^{} \bigr\rvert^{2} \ge 0,
        \label{equ:delta_quad}
	\end{align}
    for any $n \ge 2$. We have used that $\IE(X_{n-1}^{} \overline{X_{n-2}^{}})
    - \IE_{n-1}^{} \overline{\IE_{n-2}^{}} = 0$ in ($\ast$) which is a
    consequence of the independence of the random variables $X_{n}^{}$
    \cite[Rem.~6.16]{moll}. Our study of the sequence $(\CS_{n}^{})_{n \in
    \IN_{0}}^{}$ proceeds with some preparing notes on the sequence
    $(\Psi_{n}^{})_{n \ge 2}^{}$; see also Figure~\ref{fig:deltadiff}.
	\begin{lem} \label{lem:delta_func}
        For all $n \ge 2$, the function $\Psi_{n}^{}$ is real analytic.
        Moreover, one has $\Psi_{n}^{}(k) \le 2$ and $\Psi_{n+1}^{}(k) \le
        \Psi_{n}^{}(k)$ for all $k \in \IR$.
	\end{lem}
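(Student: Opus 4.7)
My plan is to reduce all three assertions to a single algebraic identity for the quantity $D_n^{} \defgl (1 - \e_{n-2}^{})\IE_{n-1}^{} - (1 - \e_{n-1}^{})\IE_{n-2}^{}$, which by the last line of Eq.~$\eqref{equ:delta_quad}$ satisfies $\Psi_n^{} = \tfrac{1}{2}|D_n^{}|^2$. The real analyticity claim is essentially immediate: a routine induction on the recursion~$\eqref{equ:pp_rek}$ shows that each $\IE_n^{}$ is a finite $\IC$-linear combination of exponentials $\e^{-2\pi\II k \nu}$ with real frequencies $\nu$, so $\IE_n^{}$, $\overline{\IE_n^{}}$, $\e_n^{}$ and $\overline{\e_n^{}}$ are entire as functions of $k$, and $\Psi_n^{}$, being a polynomial expression in them, inherits real analyticity on $\IR$.

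The substantive step will be to establish the telescoping identity
\begin{equation*}
    D_{n+1}^{} \;=\; -(p_0^{} + p_1^{}\e_{n-1}^{})\, D_n^{}, \qquad n \ge 2.
\end{equation*}
I would prove it by substituting the recursion~$\eqref{equ:pp_rek}$ into $D_{n+1}^{} = (1-\e_{n-1}^{})\IE_n^{} - (1-\e_n^{})\IE_{n-1}^{}$ and exploiting the Fibonacci-type relation $\e_n^{} = \e_{n-1}^{}\e_{n-2}^{}$, which comes from $L_n^{} = L_{n-1}^{} + L_{n-2}^{}$. After expansion, the coefficient of $\IE_{n-1}^{}$ collapses to $-(p_0^{} + p_1^{}\e_{n-1}^{})(1 - \e_{n-2}^{})$, while that of $\IE_{n-2}^{}$ becomes $(p_0^{} + p_1^{}\e_{n-1}^{})(1 - \e_{n-1}^{})$, so the common factor $-(p_0^{} + p_1^{}\e_{n-1}^{})$ factors out. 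Because $p_0^{}, p_1^{} \ge 0$ and $p_0^{} + p_1^{} = 1$, the triangle inequality gives $|p_0^{} + p_1^{}\e_{n-1}^{}| \le 1$, so that $|D_{n+1}^{}| \le |D_n^{}|$ pointwise, which is exactly the monotonicity $\Psi_{n+1}^{}(k) \le \Psi_n^{}(k)$.

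The uniform bound $\Psi_n^{} \le 2$ will then follow from the monotonicity by reducing to the base case $n = 2$. Using $\IE_0^{} = \e^{-2\pi\II k}$, $\IE_1^{} = \e^{-2\pi\II k\lambda_1^{}}$ together with $L_0^{} = 1$ and $L_1^{} = \lambda_1^{}$, the cross exponentials $\e_0^{}\IE_1^{}$ and $\e_1^{}\IE_0^{}$ are equal, so they cancel in $D_2^{}$ and leave $D_2^{} = \IE_1^{} - \IE_0^{}$. A direct computation then yields $\Psi_2^{}(k) = 1 - \cos\bigl(2\pi k(\lambda_1^{} - 1)\bigr) \le 2$, and the monotonicity propagates this estimate to every $n \ge 2$.

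I expect the only genuinely nontrivial step to be the algebraic identification of the factor $-(p_0^{} + p_1^{}\e_{n-1}^{})$ in the recursion for $D_n^{}$; all three parts of the lemma follow easily once this factorisation is in hand. Without it, one would be forced to bound $\IE_n^{}$ and $\Psi_n^{}$ jointly, which is awkward because $|\IE_n^{}|$ itself is not uniformly bounded in $n$.
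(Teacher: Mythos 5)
Your proposal is correct and follows essentially the same route as the paper: your $D_n^{}$ is exactly the paper's $\psi_n^{}$, your telescoping identity $D_{n+1}^{} = -(p_0^{}+p_1^{}\e_{n-1}^{})D_n^{}$ is the paper's Eq.~\eqref{equ:rek_delta}, and the triangle-inequality step plus the explicit computation of $\Psi_2^{}$ match the paper's argument. No substantive differences.
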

	\begin{proof}
	    The representation of Eq.~$\eqref{equ:delta_quad}$ immediately shows the
	    analyticity of $\Psi_{n}^{}$ because sums and products of trigonometric
	    functions are real analytic. Next, we observe that
	    \begin{align*}
            \Psi_{2}^{}(k) &= \frac{1}{2}\big\lvert (1-\e^{-2\pi\II k})
            \e^{-2\pi\II k\lambda_{1}^{}} - (1-\e^{-2\pi\II k\lambda_{1}^{}})
            \e^{-2\pi\II k} \big\rvert^{2} \\
	        &= 1 - \cos\bigl(2\pi k(1 - \lambda_{1}^{}) \bigr) \le 2.
	    \end{align*}
        Now, for $n \ge 2$ we define $\psi_{n}^{} \defgl \psi_{n}^{}(k) \defgl
        (1-\e_{n-2}^{})\IE_{n-1}^{} - (1-\e_{n-1}^{})\IE_{n-2}^{}$. Applying
        the recursion for $\IE_{n}^{}$ once on the first summand and using the
        recursion $L_{n}^{} = L_{n-1}^{} + L_{n-2}^{}$ implies
	    \begin{align}\label{equ:rek_delta}
	        \psi_{n+1}^{} &= (1-\e_{n-1}^{})\IE_{n}^{} - (1-\e_{n}^{})\IE_{n-1}^{}
	        = -(p_{0}^{} + p_{1}^{}\e_{n-1}^{}) \psi_{n}^{}.
	    \end{align}
	    This yields the monotonicity of $\Psi_{n}^{}$ because
	    \begin{equation*}
            \lvert \psi_{n+1}^{} \rvert = \lvert p_{0}^{}\psi_{n}^{} +
            p_{1}^{}\e_{n-1}^{} \psi_{n}^{} \rvert \le p_{0}^{} \lvert
            \psi_{n}^{} \rvert + p_{1}^{} \lvert \psi_{n}^{} \rvert = \lvert
            \psi_{n}^{} \rvert,
	    \end{equation*}
	    and therefore
	    \begin{equation*}
            \Psi_{n+1}^{}(k) = \frac{1}{2}\lvert \psi_{n+1}^{}(k) \rvert^{2}
            \le \frac{1}{2}\lvert \psi_{n}^{}(k) \rvert^{2} = \Psi_{n}^{}(k).
            \qedhere
	    \end{equation*}
	\end{proof}
    
    \begin{figure}
       \centering
       \includegraphics[scale=0.6]{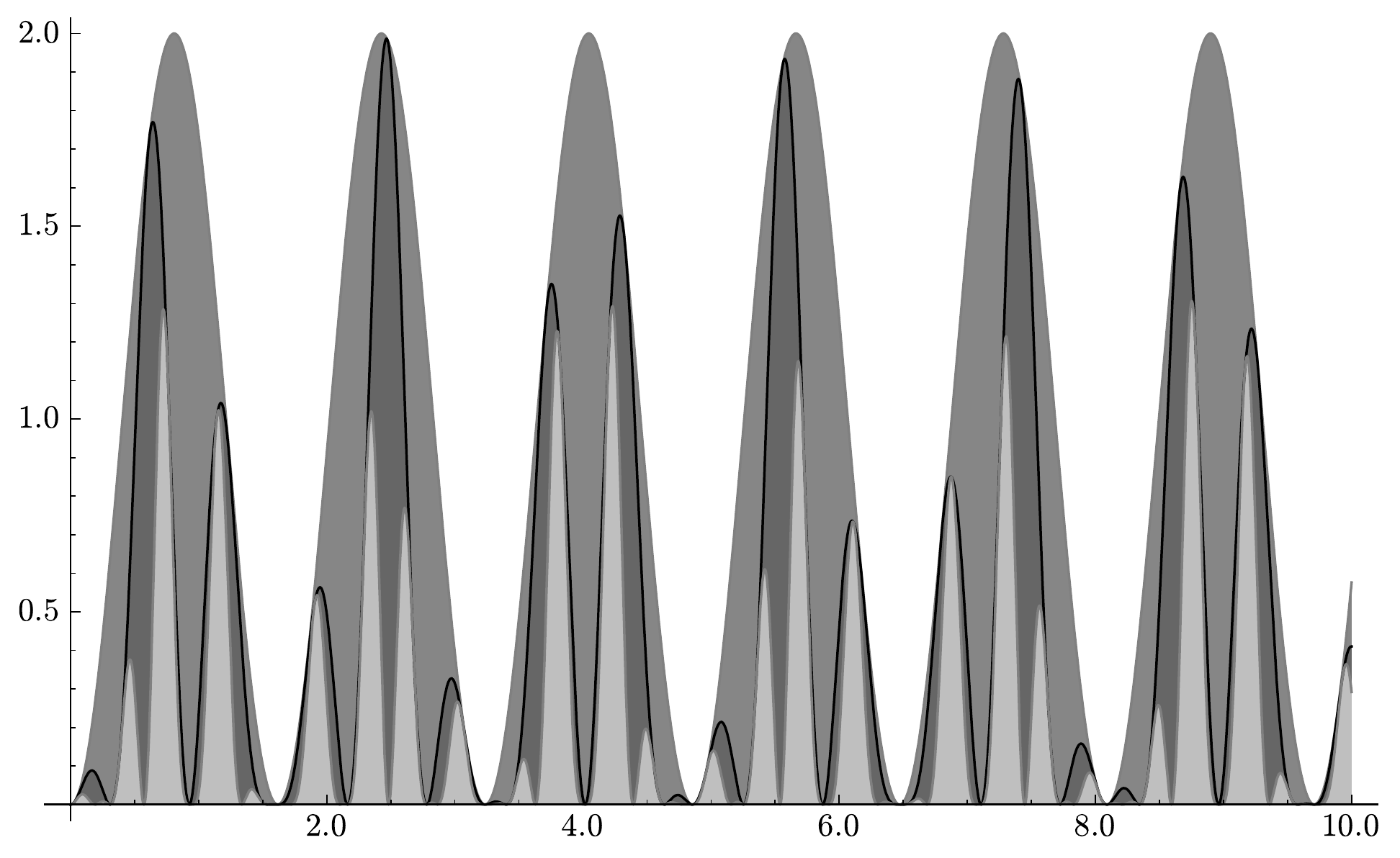}
       \caption{The function $\Psi_{n}^{}$ for $n = 2$ (grey), $n = 3$ (dark
           grey) and $n = 4$ (light grey).}
       \label{fig:deltadiff}
    \end{figure}

    \begin{pro}\label{pro:radon_niko_density}
        For any $n \in \IN_{0}^{}$, consider the function $\phi_{n}^{} \colon
        \IR \longto \IR_{\ge 0}^{}$, defined by
        \begin{equation*}
            \phi_{n}^{}(k) \defgl \frac{1}{L_{n}^{}} \IV\bigl( X_{n}^{}(k) \bigr).
        \end{equation*}
        On $\IR$, the sequence $(\phi_{n}^{})_{n \in \IN_{0}}^{}$ converges
        uniformly to the continuous function $\phi \colon \IR \longto \IR_{\ge
        0}^{}$, with
        \begin{equation}\label{equ:radon_niko_density}
            \phi(k) \defgl \frac{2p_{0}^{}p_{1}^{} \lambda_{1}^{}}{\sqrt{5}} 
            \sum_{i=2}^{\infty} \lambda_{1}^{-i} \Psi_{i}^{}(k).
        \end{equation}
    \end{pro}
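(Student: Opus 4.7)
The plan is to solve explicitly the inhomogeneous Fibonacci-type recursion
\[
\IV_n^{} \;=\; \IV_{n-1}^{} + \IV_{n-2}^{} + 2 p_0^{} p_1^{}\, \Psi_n^{},
\qquad \IV_0^{} = \IV_1^{} = 0,
\]
that was derived immediately before the statement, and then to analyse the resulting closed form via Binet's formula. Lemma~\ref{lem:delta_func} will supply the uniform bound $\Psi_i^{}(k) \le 2$ that renders all estimates independent of $k$.

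First, a direct induction on $n$ establishes the representation
\[
\IV_n^{} \;=\; 2 p_0^{} p_1^{} \sum_{i=2}^{n} F_{n-i+1}^{}\, \Psi_i^{},
\]
where $(F_n)_{n \ge 0}^{}$ denotes the Fibonacci sequence with $F_0^{} = 0$, $F_1^{} = 1$ and $F_n^{} = F_{n-1}^{} + F_{n-2}^{}$, because both sides satisfy the same linear recursion and the same initial conditions. Inserting Binet's formula $F_k^{} = (\lambda_1^{k} - (\lambda_1^{\prime})^{k})/\sqrt{5}$ together with $L_n^{} = \lambda_1^{n}$ then yields the splitting
\[
\phi_n^{}(k) \;=\; \frac{2 p_0^{} p_1^{} \lambda_1^{}}{\sqrt{5}} \sum_{i=2}^{n} \lambda_1^{-i}\, \Psi_i^{}(k) \;-\; \frac{2 p_0^{} p_1^{}}{\sqrt{5}} \sum_{i=2}^{n} \frac{(\lambda_1^{\prime})^{n-i+1}}{\lambda_1^{n}}\, \Psi_i^{}(k),
\]
in which the first sum is a partial sum of the target series $\phi(k)$ and the second is an error term to be controlled as $n \to \infty$.

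For the uniform convergence I would invoke Lemma~\ref{lem:delta_func}, which gives $0 \le \Psi_i^{}(k) \le 2$ for every $k \in \IR$ and $i \ge 2$. Consequently, the tail of the first sum is dominated by a constant multiple of $\lambda_1^{-n}$ uniformly in $k$, so the series defining $\phi$ converges absolutely and uniformly on $\IR$. For the error term, the identity $\lambda_1^{} \lambda_1^{\prime} = -1$ yields $\lvert \lambda_1^{\prime} \rvert = 1/\lambda_1^{}$, whence $\lvert (\lambda_1^{\prime})^{n-i+1}/\lambda_1^{n} \rvert = \lambda_1^{\, i - 2n - 1}$, and a geometric-series computation bounds the whole error by $O(\lambda_1^{-n})$, again uniformly in $k$. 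Continuity of $\phi$ then comes for free, since every $\Psi_i^{}$ is real analytic and a uniform limit of continuous functions is continuous.

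The main ``obstacle'' here is not any conceptual difficulty but careful bookkeeping: one must verify that the Green's function of the inhomogeneous recursion is indeed the shifted Fibonacci sequence appearing above, and one must keep the indices in the Binet splitting consistent so that the two error estimates are genuinely uniform in $k$. Once Lemma~\ref{lem:delta_func} provides the bound $\Psi_i^{} \le 2$, no further analytic input is required.
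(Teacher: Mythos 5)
Your proof is correct and follows essentially the same route as the paper: both solve the inhomogeneous recursion to get $\IV_{n}^{} = 2p_{0}^{}p_{1}^{}\sum_{i=2}^{n}\ell_{1,n+1-i}^{}\Psi_{i}^{}(k)$ with Fibonacci coefficients, then split $\phi_{n}^{}-\phi$ via Binet's formula into a conjugate-root error term and a tail, both controlled uniformly by the bound $\Psi_{i}^{}\le 2$ from Lemma~\ref{lem:delta_func}. The index bookkeeping in your splitting checks out, so nothing further is needed.
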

    
    \begin{proof}
        From the recursion relation $\IV_{n}^{} = \IV_{n-1}^{} + \IV_{n-2}^{} +
        2p_{0}^{}p_{1}^{} \Psi_{n}^{}$, we conclude the representation
        \begin{equation*}
            \lim_{n \to \infty} \phi_{n}^{}(k) = \lim_{n \to \infty}
            \frac{2p_{0}^{}p_{1}^{}}{L_{n}^{}} \sum_{i=2}^{n} \ell_{1,n+1-i}^{}
            \Psi_{i}^{}(k) = \frac{2p_{0}^{}p_{1}^{} \lambda_{1}^{}}{\sqrt{5}}
            \sum_{i=2}^{\infty} \lambda_{1}^{-i} \Psi_{i}^{}(k),
        \end{equation*}
        where $\ell_{1,n}^{}$ denotes the $n$th Fibonacci number as introduced
        after Eq.~$\eqref{equ:concat_rule}$ on page~\pageref{equ:concat_rule}.
        Next, we observe that $\phi$ is convergent because an application of
        Lemma~\ref{lem:delta_func} yields
        \begin{equation*}
            \phi(k) \le \frac{4p_{0}^{}p_{1}^{} \lambda_{1}^{}}{\sqrt{5}}
            \sum_{i=0}^{\infty} \lambda_{1}^{-i-2} = \frac{4p_{0}^{}p_{1}^{}
            \lambda_{1}^{}}{\sqrt{5}} \le \frac{\lambda_{1}^{}}{\sqrt{5}}.
        \end{equation*}
        Thus, $\phi$ is bounded and the sum consists of non-negative elements
        only.  The uniformity of the convergence is implied by the following
        short calculation
        \begin{align}
            \lvert \phi_{n}^{}(k) - \phi(k) \rvert &= 2p_{0}^{}p_{1}^{} 
            \Bigl\lvert
                \sum_{i=2}^{n} 
                \Bigl(
                    \frac{\ell_{1,n+1-i}^{}}{L_{n}^{}} -
                    \frac{\lambda_{1}^{1-i}}{\sqrt{5}}
                \Bigr) \Psi_{i}^{}(k)
                - \sum_{i=n+1}^{\infty} \frac{\lambda_{1}^{1-i}}{\sqrt{5}} 
                \Psi_{i}^{}(k)
            \Bigr\rvert \notag\\
            &\le 4p_{0}^{}p_{1}^{} 
            \Bigl(
                \Bigl\lvert
                    \frac{(\lambda_{1}^{\prime})^{n-1}}{\lambda_{1}^{n}
                    \sqrt{5}} \sum_{i=0}^{n} (\lambda_{1}^{\prime})^{-i}
                \Bigr\rvert
                + \frac{1}{\lambda_{1}^{n} \sqrt{5}} \sum_{i=0}^{\infty} 
                \lambda_{1}^{-i}
            \Bigr)\notag \\
            &\le 
            \Bigl\lvert
                \frac{(\lambda_{1}^{\prime})^{n-1} -
                1/(\lambda_{1}^{\prime})^{2}}{\lambda_{1}^{n}
                \sqrt{5}(1-1/\lambda_{1}^{\prime})}
            \Bigr\rvert +
            \frac{1}{\lambda_{1}^{n-2}\sqrt{5}}, \label{equ:ac_bound}
        \end{align}
        and both summands in the last line converge to zero, as $n \to \infty$.
        This means that
        \begin{equation*}
            \lim_{n \to \infty} \sup_{k \in \IR} \, \lvert \phi_{n}^{}(k) -
            \phi(k) \rvert = 0,
        \end{equation*}
        which at the same time implies the continuity of $\phi$.
    \end{proof}
	
	\begin{cor}
	    The roots of $\phi$ are precisely the roots of $\Psi_{2}^{}$, and they are 
	    given by all integer multiples of $\lambda_{1}^{}$.
	\end{cor}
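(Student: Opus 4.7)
The plan is to combine the non-negativity and monotonicity already proven for the sequence $(\Psi_n)_{n \ge 2}$ with a direct computation of the zero set of the explicit expression for $\Psi_2$.

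First I would observe that, by Equation~\eqref{equ:delta_quad}, every term $\Psi_i(k)$ is non-negative, and by the representation of $\phi$ in Equation~\eqref{equ:radon_niko_density} the prefactor $2 p_0 p_1 \lambda_1 / \sqrt{5}$ is strictly positive (under the standing assumption $\mathbold{p}_m \gg 0$). Hence $\phi(k) = 0$ if and only if $\Psi_i(k) = 0$ for every $i \ge 2$. One direction is then immediate: every zero of $\phi$ is in particular a zero of $\Psi_2$.

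For the converse I would invoke the monotonicity statement $\Psi_{n+1}(k) \le \Psi_n(k)$ from Lemma~\ref{lem:delta_func}. Combined with $\Psi_n \ge 0$, this forces $\Psi_n(k) = 0$ for every $n \ge 2$ as soon as $\Psi_2(k) = 0$, so such a $k$ is automatically a zero of $\phi$. This shows that $\phi$ and $\Psi_2$ have the same zero set.

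It remains to identify this zero set. From the calculation carried out in the proof of Lemma~\ref{lem:delta_func} one has the closed form $\Psi_2(k) = 1 - \cos\bigl(2\pi k(1 - \lambda_1)\bigr)$, so $\Psi_2(k) = 0$ if and only if $k(1 - \lambda_1) \in \IZ$. Using the defining relation $\lambda_1^2 = \lambda_1 + 1$, one has $\lambda_1 - 1 = 1/\lambda_1$, hence $1/(1-\lambda_1) = -\lambda_1$; writing $k = j/(1-\lambda_1)$ with $j \in \IZ$ yields $k = -j \lambda_1$, so the zero set is exactly $\lambda_1 \IZ$. No step here is delicate; the only thing to be careful about is to cite the right ingredients (non-negativity, monotonicity, and the algebraic identity for $\lambda_1$) in the correct order so that no circularity creeps in.
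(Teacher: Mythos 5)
Your proof is correct. It follows the same overall skeleton as the paper — reduce the zero set of $\phi$ to that of $\Psi_{2}^{}$, then compute the latter explicitly — but the middle step is handled by a genuinely different (and leaner) mechanism. The paper iterates the recursion \eqref{equ:rek_delta} into the explicit product representation \eqref{equ:rec_prod}, reads off that $\psi_{2}^{}(k)=0$ forces $\psi_{n+1}^{}(k)=0$, and along the way computes the root sets $R_{j}^{}$ of the individual factors $f_{j}^{}$ (which are not actually needed for the stated conclusion, since the roots of $\phi$ come from intersecting the root sets of all the $\Psi_{n}^{}$, and that intersection is already pinned down by $\Psi_{2}^{}$ alone). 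You instead get the inclusion $\{\Psi_{2}^{}=0\}\subseteq\{\phi=0\}$ directly from the monotonicity $\Psi_{n+1}^{}\le\Psi_{n}^{}$ together with non-negativity, both of which are already recorded in Lemma~\ref{lem:delta_func} and Eq.~\eqref{equ:delta_quad}; combined with the strict positivity of the weights in \eqref{equ:radon_niko_density} this gives both inclusions at once. Your route buys economy — no re-derivation of the product formula and no analysis of the $f_{j}^{}$ — at the cost of not exhibiting the extra roots $R_{j}^{}$ that each finite-stage $\Psi_{n}^{}$ acquires, which the paper's computation makes visible. The closing identification $\{k : k(1-\lambda_{1}^{})\in\IZ\}=\lambda_{1}^{}\IZ$ via $1/(1-\lambda_{1}^{})=-\lambda_{1}^{}$ matches the paper's. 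No gaps.
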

    
    \begin{figure}
       \centering
       \includegraphics[scale=0.6]{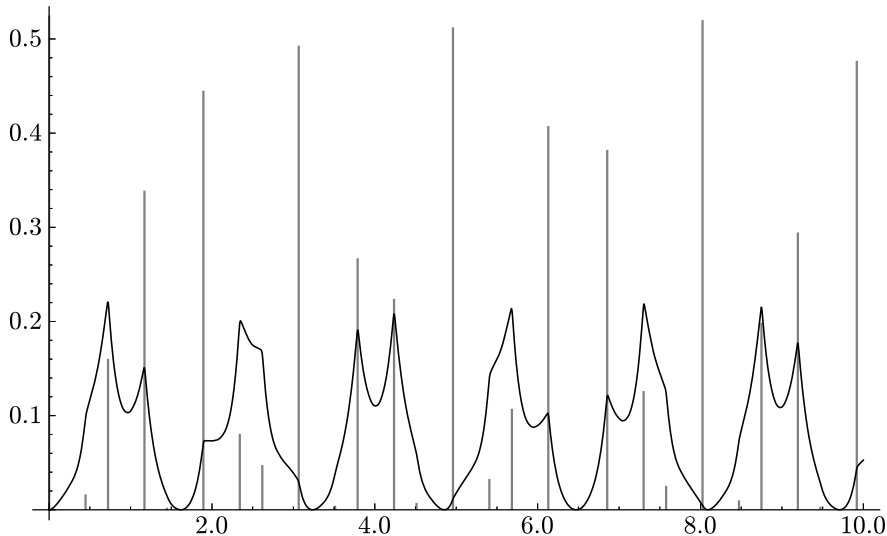}
       \caption{The pure point part (grey) and the absolutely continuous part 
           (black) are illustrated for the case $m = 1$ with $\mathbold{p}_{1}^{} = 
           (1/2,1/2)$.}
       \label{fig:pp_gl}
    \end{figure}
    
    \begin{proof}
        For $n \ge 1$, the recursion formula for $\psi_{n}^{}$ in 
        Eq.~$\eqref{equ:rek_delta}$ can be 
        rewritten as
        \begin{align}
        \psi_{n+1}^{}(k) &= (-1)^{n-1} \psi_{2}^{}(k) \prod_{j=1}^{n-1} 
        \bigl(p_{0}^{} + p_{1}^{}\e^{-2\pi\II k L_{j}^{}} \bigr) \notag \\
        &= (-1)^{n-1} \bigl( \e^{-2\pi\II k\lambda_{1}^{}} -\e^{-2\pi\II k} 
        \bigr) 
        \prod_{j=1}^{n-1} \bigl(p_{0}^{} + p_{1}^{}\e^{-2\pi\II k L_{j}^{}} 
        \bigr). \label{equ:rec_prod}
        \end{align}
        Considering each factor of the product in Eq.~$\eqref{equ:rec_prod}$ 
        separately 
        and including $\Psi_{j}^{}(k) = \lvert \psi_{j}^{}(k) \rvert^{2}/2$ for any 
        $j 
        \ge 2$, we explore
        the function $f_{j}^{} \colon \IR \longto \IR_{\ge 0}^{}$ that is defined as
        \begin{equation*}
        f_{j}^{}(k)
        \defgl \bigl\lvert p_{0}^{} + p_{1}^{}\e^{-2\pi\II k L_{j}^{}} 
        \bigr\rvert^{2} = 
        p_{0}^{2} + p_{1}^{2} + 2p_{0}^{}p_{1}^{} \cos(2\pi k L_{j}^{}).
        \end{equation*}
        Here, for 
        all $j \in
        \IN$, the set of roots of $f_{j}^{}$ reads
        \begin{equation*}
        R_{j}^{} = \biggl\{ \frac{\pm \arccos\bigl(\frac{2p_{0}^{}p_{1}^{} - 
                1}{2p_{0}^{}p_{1}^{}}\bigr) + 2\pi q}{2\pi L_{j}^{}} \,\bigg|\, q 
                \in 
        \IZ \biggr\}.
        \end{equation*}
        Moreover, the expression $\lvert \e^{-2\pi\II k\lambda_{1}^{}}
        -\e^{-2\pi\II k} \rvert^{2} = 2 - 2\cos\bigl( 2\pi k(1-\lambda_{1}^{})
        \bigr)$ vanishes on all $k \in \lambda_{1}^{} \IZ$. This implies that
        \begin{equation*}
            \lambda_{1}^{} \IZ \cup \bigcup_{j = 1}^{n-1} R_{j}^{}
        \end{equation*}
        is the set of roots of $\Psi_{n+1}^{}$ for all $n \ge 1$. Because of 
        Lemma~\ref{lem:delta_func} and 
        the representation
        of $\phi$ in Eq.~$\eqref{equ:radon_niko_density}$, this implies that
        $\lambda_{1}^{}\IZ$ is the set of roots of $\phi$.
    \end{proof}
    \medskip
            \begin{figure}
            \centering
            \includegraphics[scale=0.6]{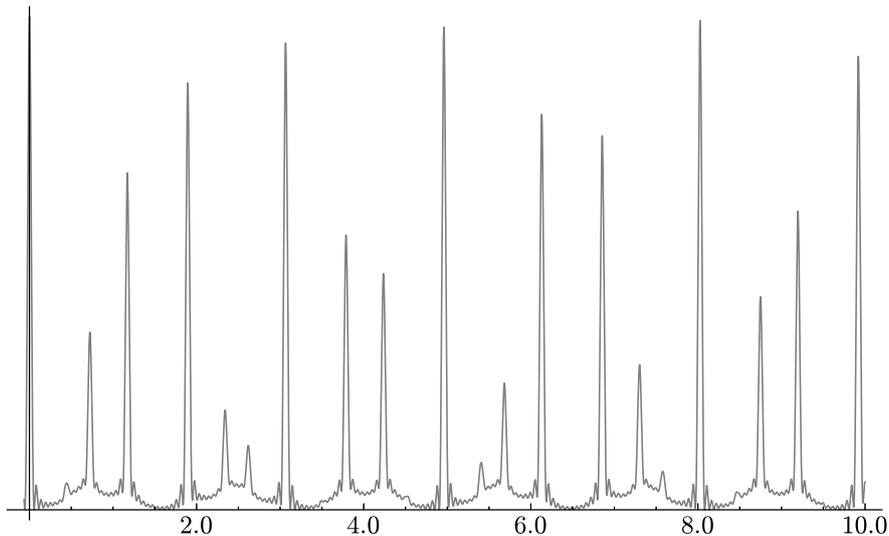}
            \caption{Approximation of the diffraction measure for the case
                $m=1$ with $\mathbold{p}_{1}^{} = (1/2,1/2)$, based on the
                recursion of Eq.~$\eqref{equ:random_var}$ with $n = 6$.}
            \label{fig:fulldiffr}
            \end{figure}
   
	Finally, Proposition~\ref{pro:radon_niko_density} implies the vague convergence
	of the sequence $(\CS_{n}^{})_{n \in \IN_{0}}^{}$ and the existence of
	$\widehat{\gamma_{\Lambda,1}^{}}$ immediately yields the vague convergence of
	$(\CP_{n}^{})_{n \in \IN_{0}}^{}$. Therefore, we almost surely find that
	\begin{equation*}
	    \widehat{\gamma_{\Lambda,1}^{}} = 
	        (\widehat{\gamma_{\Lambda,1}^{}})_{\circleddash}^{} + 
	        (\widehat{\gamma_{\Lambda,1}^{}})_{\mathsf{pp}^{}} 
	    + \phi(k) \lambda,
	\end{equation*}
    where the precise nature of 
    $(\widehat{\gamma_{\Lambda,1}^{}})_{\circleddash}^{}$ stays an open question 
    and needs further study in the future. Following Hof \cite[Thm.~3.2]{hof}, 
    we find
	\begin{equation*}
	    \widehat{\gamma_{\Lambda,1}^{}}(\{k\}) = \lim_{n \to \infty} \frac{1}{L_{n}^{2}}
	    \bigl\lvert \mathbb{E} \bigl(X_{n}^{}(k) \bigr) \bigr\rvert^{2},
	\end{equation*}
	and a sketch of $\widehat{\gamma_{\Lambda,1}^{}}(\{k\})$ and
	$\widehat{\gamma_{\Lambda,1}^{}}$ is illustrated in Figures~\ref{fig:pp_gl}
	and~\ref{fig:fulldiffr}, respectively.
    
    \section*{Outlook}
    
    This paper establishes a first systematic step into the realm of local
    mixtures of substitution rules. The choice of the noble means example
    promised some technical simplifications because all members of $\CN_{m}^{}$
    define the same two-sided discrete hull. One obvious extension of the RNMS
    case can be found in the local mixture of families that do no longer share
    this property. Concerning the computation of the topological entropy, this
    has recently been done for some case  by Nilsson \cite{nilsson2}. More
    generally, one may raise the question which properties a family of
    substitutions must have in order to preserve the features that were derived
    in this treatment.
    
    Leaving the realm of symbolic dynamics and one-dimensional inflation rules,
    one significant enhancement of the theory would be a two or
    three-dimensional example. The (locally) random Penrose tiling was already
    discussed by Godrèche and Luck \cite[Sec. 5.2]{luck}, although a deeper
    mathematical analysis is desirable here, too.

    \section*{Acknowledgements}

    The author wishes to thank Michael Baake, Tobias Jakobi and Johan Nilsson
    for helpful discussions. This work is supported by the German Research
    Foundation (DFG) via the Collaborative Research Centre (CRC 701) through
    the faculty of Mathematics of Bielefeld University.


\begin{thebibliography}{55}
        \bibitem{bagrimm}
            M. Baake and U. Grimm,
            \textit{Aperiodic Order. Vol. 1. A Mathematical Invitation} (Cambridge 
            University Press, Cambridge) (2013).
        \bibitem{berg}
            C. Berg and G. Forst,
            \textit{Potential Theory on Locally Compact Abelian Groups}
            (Springer, Berlin) (1975).
        \bibitem{cornfeld}
            I.P. Cornfeld, S.V. Fomin and Y.G. Sinai,
            \textit{Ergodic Theory}
            (Springer, New York) (1982).
        \bibitem{etemadi}
            N. Etemadi,
            An elementary proof of the strong law of large numbers.
            \textit{Z. Wahrscheinlichkeitsth. verw. Geb.} \textbf{55}, 119--122 
            (1981).
        \bibitem{fogg}
            N.P. Fogg,
            \textit{Substitutions in Dynamics, Arithmetics and Combinatorics}
            (Springer, Berlin) (2002).
        \bibitem{hof}
            A. Hof,
            On diffraction by aperiodic structures.
            \textit{Commun. Math Phys.} \textbf{169}, 25--43 (1995).
        \bibitem{kitchens}
             B. Kitchens,
            \textit{Symbolic Dynamics: One-sided, Two-sided and Countable State
            Markov Shifts}
            (Springer, Berlin) (1998).
        \bibitem{lang}
            S. Lang,
            \textit{Real and Functional Analysis}, 3rd ed.
            (Springer, New York) (1993).
        \bibitem{lothaire}
             M. Lothaire,
            \textit{Algebraic Combinatorics on Words}
            (Cambridge University Press, Cambridge) (2002).
        \bibitem{luck}
            C. Godr{\`e}che and J.M. Luck,
            Quasiperiodicity and randomness in tilings of the plane.
            \textit{J. Stat. Phys.} \textbf{55}, 1--28 (1989).
        \bibitem{moll}
            M. Moll,
            On a Family of Random Noble Means Substitutions.
            \textit{PhD Thesis}, 2013.
            \href{http://pub.uni-bielefeld.de/publication/2637807}
                 {http://pub.uni-bielefeld.de/publication/2637807}
        \bibitem{moobook}
            R.V. Moody (ed.),
            \textit{The Mathematics of Long-Range Aperiodic Order},
            NATO ASI Series C 489 (Kluwer, Dordrecht) (1997).
        \bibitem{muri}
            P. Müller and C. Richard,
            Ergodic properties of randomly coloured point sets.
            \textit{Canad. J. Math.} \textbf{65}, 349--402 (2013).
            \href{http://arxiv.org/abs/1005.4884}{arXiv:1005.4884}.
        \bibitem{moo}
            R.V. Moody,
            Meyer sets and their duals.
            in \cite{moobook}, 403--449 (1997).
        \bibitem{nilsson}
            J. Nilsson,
            On the entropy of a family of random substitutions.
            \textit{Monatsh. Math.} \textbf{166}, 1--15 (2012).
            \href{http://arxiv.org/abs/1103.4777}{arXiv:1103.4777}.
        \bibitem{nilsson2}
            J. Nilsson,
            On the entropy of a two step random Fibonacci substitution.
            \textit{Entropy} \textbf{15}, 3312--3324 (2013).
            \href{http://arxiv.org/abs/1303.2526}{arXiv:1303.2526}.
        \bibitem{partha} 
            K.R. Parthasarathy,
            \textit{Introduction to Probability and Measure}, 
            TRM (Hindustan Book Agency, New Delhi) (2005).
        \bibitem{PatBook} 
            J. Patera (ed.) 
            \textit{Quasicrystals and Discrete Geometry}, 
            Fields Institute Monographs, vol.\ 10 
            (AMS, Providence, RI) (1998).
        \bibitem{queff}
            M. Queff{\'e}lec,
            \textit{Substitution Dynamical Systems - Spectral Analysis}, 2nd
            ed.,
            LNM 1294 (Springer, Berlin) (2010).
        \bibitem{schlottmann}
            M. Schlottmann,
            Cut-and-project sets in locally compact Abelian groups.
            in \cite{PatBook}, 247--264, (1998).
        \bibitem{seneta}
            E. Seneta,
            \textit{Non-negative Matrices and Markov Chains}, rev. 2nd ed.
            (Springer, New York) (2006).
    \end{thebibliography}
\end{document}